\documentclass[11pt]{amsart}

\usepackage[margin=1.0in]{geometry}
\usepackage{amsmath,amsthm,amssymb,amsfonts,mathtools}
\usepackage{enumitem}
\usepackage[backref=page]{hyperref}
\usepackage{mathrsfs}
\usepackage{tikz-cd}
\usepackage{microtype}
\usepackage{thmtools}
\usepackage{thm-restate}
\usepackage{array}
\usepackage{verbatim}
\usepackage{setspace}

\hypersetup{
  colorlinks=true,
  linkcolor=blue,
  citecolor=blue,
  urlcolor=blue
}

\title{On Chromatic Asymptotic Approximate Groups}
\author{Arindam Biswas}
\email{arin.math@gmail.com}
\date{}
\subjclass[2020]{Primary 11B13; Secondary 11B34, 11B75, 20F69, 11P70}
\keywords{chromatic sumsets, asymptotic approximate groups, approximate groups, semilinear sets, representation functions, additive combinatorics}

\declaretheorem[style=plain,name=Theorem,numberwithin=section]{theorem}
\declaretheorem[style=plain,name=Proposition,sibling=theorem]{proposition}
\declaretheorem[style=plain,name=Lemma,sibling=theorem]{lemma}
\declaretheorem[style=plain,name=Corollary,sibling=theorem]{corollary}
\declaretheorem[style=definition,name=Definition,sibling=theorem]{definition}

\declaretheorem[style=remark,name=Remark,sibling=theorem]{remark}



\microtypesetup{nopatch=footnote}

\newcommand{\N}{\mathbb N}
\newcommand{\Nzero}{\mathbb N_0}
\newcommand{\Z}{\mathbb Z}
\newcommand{\R}{\mathbb R}
\newcommand{\cA}{\mathcal A}
\newcommand{\bh}{\mathbf h}
\newcommand{\bg}{\mathbf g}
\newcommand{\ba}{\mathbf a}

\newcommand{\precc}{\preceq}

\begin{document}

\begin{abstract}
We study a chromatic theory of asymptotic approximate groups for tuples of subsets of abelian groups, combining Nathanson's chromatic sumset formalism with asymptotic covering ideas from approximate group theory. This framework encodes simultaneous additive growth across several color classes. We show some general lifting and invariance principles, establish chromatic covering theorems for finite tuples and for tuples whose color classes are finite unions of unbounded linear sets, and obtain exact structure theorems for translated submonoids and finite-set-plus-submonoid sets. We also obtain sharper binomial bounds in the finite and unbounded-linear cases than the previous lattice-covering estimates. In the integer setting, we show that for each fixed threshold $t$, the threshold-$t$ chromatic layers form an asymptotic approximate family, using Nathanson's eventual interval-plus-edges description to obtain a uniform bound of size $r+2$ and prove an inhomogeneous extension for certain families.
\end{abstract}

\maketitle


\section{Introduction, motivation and background}

Let $G$ be an abelian group, written additively. For subsets $X,Y\subseteq G$, let $X+Y$ denote the Minkowski sumset
\[
X+Y:=\{x+y:x\in X,\ y\in Y\}.
\]
More generally, for subsets $X_1,\dots,X_m\subseteq G$, define
\[
X_1+\cdots+X_m:=\{x_1+\cdots+x_m:x_j\in X_j\text{ for all }j\}.
\]

If $A\subseteq G$ and $h\in \N$, the $h$-fold sumset of $A$ is
\[
hA:=\underbrace{A+\cdots+A}_{h\text{ times}},
\qquad
0A:=\{0\}.
\]

We begin with the additive form of Tao's notion of approximate group.

\begin{definition}\label{def:AG}
Let $K\ge 1$. A nonempty finite subset $A$ of an abelian group $G$ is called a $K$-approximate group if
\[
0\in A,
\qquad
-A=A,
\qquad\text{and}\qquad
A+A\subseteq X+A
\]
for some finite set $X\subseteq G$ with $|X|\le K$.
\end{definition}

Approximate groups are central objects in additive combinatorics, group theory, and number theory; see, for example, \cite{Hel08,BGT12}. Their modern formulation was motivated in part by their role, in a nonabelian setting, in Bourgain--Gamburd's work \cite{BG08} on super-strong approximation. In the abelian setting, related small-doubling phenomena were already present in classical work of Freiman \cite{Fre64}. Nathanson later introduced the more flexible notion of an asymptotic approximate group \cite{Nathanson2018}.

\begin{definition}[Asymptotic $(r,\ell)$-approximate group {\cite{Nathanson2018}}]\label{def:AAG}
Let $r,\ell\in \N$. A nonempty subset $A$ of an abelian group $G$ is called an asymptotic $(r,\ell)$-approximate group if there exists $h_0\in \N$ such that for every integer $h\ge h_0$, there exists a set $X_h\subseteq G$ with
\[
|X_h|\le \ell
\qquad\text{and}\qquad
r(hA)\subseteq X_h+hA.
\]
\end{definition}

Independently, Nathanson developed a chromatic sumset formalism in \cite{NathansonChromaticSumsets}. Fix a positive integer $q$, and let
\[
\cA=(A_1,\dots,A_q)
\]
be a $q$-tuple of nonempty subsets of $G$. Instead of a single scalar parameter $h$, one works with a vector parameter
\[
\bh=(h_1,\dots,h_q)\in \Nzero^q,
\]
and considers the associated chromatic sumset
\[
\bh\cdot \cA:=h_1A_1+\cdots+h_qA_q.
\]
We write
\[
\bh\precc \bh'
\]
if $h_i\le h_i'$ for every $i$, and for $r\in \N$ we write
\[
r\bh:=(rh_1,\dots,rh_q).
\]
In the integer setting, Nathanson also defined the chromatic representation function and proved that fixed-threshold chromatic layers eventually admit an interval-plus-edges description.

The present paper combines Nathanson's chromatic formalism with the asymptotic covering viewpoint. This leads naturally to the inclusion
\[
(r\bh)\cdot \cA\subseteq X_{\bh}+\bh\cdot \cA.
\]
Since the ambient group is abelian, this is equivalent to
\[
r(\bh\cdot \cA)\subseteq X_{\bh}+\bh\cdot \cA.
\]

\begin{definition}[CAAG]\label{def:CAAG}
Let $r,\ell\in \N$. Let $G$ be an abelian group and $\cA=(A_1,\dots,A_q)$ a $q$-tuple of nonempty subsets of $G$. We say that $\cA$ is a \emph{chromatic asymptotic $(r,\ell)$-approximate group}, abbreviated \emph{CAAG}, if there exists a threshold vector
\[
\bh_0\in \Nzero^q
\]
such that for every $\bh\succeq \bh_0$ there exists a set $X_{\bh}\subseteq G$ satisfying
\[
|X_{\bh}|\le \ell
\qquad\text{and}\qquad
(r\bh)\cdot \cA\subseteq X_{\bh}+\bh\cdot \cA.
\]
\end{definition}

\begin{definition}[CAG]\label{def:CAG}
If the same condition holds for every $\bh\in \Nzero^q$, we say that $\cA$ is a \emph{chromatic $(r,\ell)$-approximate group}.
\end{definition}

\begin{remark}
If $q=1$ and $\cA=(A)$, then $\bh\cdot \cA=hA$ and Definition~\ref{def:CAG} becomes
\[
rA\subseteq X+A,
\qquad |X|\le \ell,
\]
which is the one-set chromatic covering condition. After imposing the standard normalization conditions $0\in A$ and $-A=A$, this recovers the usual notion of approximate group from Definition~\ref{def:AG}. Likewise, Definition~\ref{def:CAAG} becomes: there exists $h_0\in \Nzero$ such that for every $h\ge h_0$, there is a set $X_h\subseteq G$ with
\[
r(hA)\subseteq X_h+hA,
\qquad |X_h|\le \ell,
\]
which is exactly the classical asymptotic $(r,\ell)$-approximate-group condition from Definition~\ref{def:AAG}.
\end{remark}

We shall also use linear and semilinear sets.

\begin{definition}
Let $a,b_1,\dots,b_d\in G$. The set
\[
P(a;b_1,\dots,b_d):=\{a+n_1b_1+\cdots+n_db_d:n_1,\dots,n_d\in \Nzero\}
\]
is called an \emph{unbounded generalized arithmetic progression} or \emph{unbounded linear set}.
\end{definition}

\begin{definition}
If bounds $m_1,\dots,m_d\in \Nzero$ are specified, the set
\[
P_{m_1,\dots,m_d}(a;b_1,\dots,b_d):=\{a+n_1b_1+\cdots+n_db_d:0\le n_j\le m_j\}
\]
is called a \emph{bounded generalized arithmetic progression} or \emph{bounded linear set}.
\end{definition}

\begin{definition}
A \emph{semilinear set} is a finite union of unbounded linear sets.
\end{definition}

Every bounded linear set is a finite union of singleton translates, hence a finite union of unbounded linear sets of the form $\{x\}=P(x;0)$. Therefore every finite union of bounded or unbounded linear sets is, after refinement, a finite union of unbounded linear sets.

For later quantitative statements, it is convenient to write
\[
\lambda_r(k):=\binom{(r+1)(k-1)}{k-1}
\qquad (r,k\in \N).
\]
The finite and unbounded-linear covering theorems will be expressed in terms of $\lambda_r(k)$.

We now record the representation-theoretic notation needed in the integer setting. For the rest of this paragraph, assume that $G=\Z$. Let $\cA=(A_1,\dots,A_q)$ be a $q$-tuple of finite nonempty subsets of $\Z$, and let $\bh=(h_1,\dots,h_q)\in \Nzero^q$. The \emph{chromatic representation function} $r_{\cA,\bh}(n)$ counts the number of tuples
\[
(a_{1,1},\dots,a_{1,h_1};\ a_{2,1},\dots,a_{2,h_2};\ \dots;\ a_{q,1},\dots,a_{q,h_q})
\]
such that
\begin{enumerate}[label=\textup{(\roman*)}]
\item $a_{i,j}\in A_i$ for every $i,j$,
\item within each color $i$, the sequence is weakly increasing,
\[
a_{i,1}\le a_{i,2}\le \cdots \le a_{i,h_i},
\]
\item the total sum equals $n$, that is,
\[
n=\sum_{i=1}^q\sum_{j=1}^{h_i} a_{i,j}.
\]
\end{enumerate}
For $t\in \N$, define the \emph{threshold-$t$ chromatic layer}
\[
(\bh\cdot \cA)^{(t)}:=\{n\in \Z:r_{\cA,\bh}(n)\ge t\}.
\]
For $t=1$, this is simply the chromatic sumset $\bh\cdot \cA$.

\begin{definition}
A finite integer tuple $\cA=(A_1,\dots,A_q)$ is called \emph{normalized} if
\[
\min(A_i)=0\quad\text{for all }i,
\]
and
\[
\gcd\Bigl(\bigcup_{i=1}^q A_i\Bigr)=1.
\]
\end{definition}

Finally, for a fixed nonempty set $B\subseteq G$, we consider inhomogeneous families of the form
\[
\bh\cdot \cA+B.
\]

\begin{definition}
Let $B\subseteq G$ be nonempty. We say that the inhomogeneous family $\bh\cdot \cA+B$ is an \emph{asymptotic $(r,\ell)$-approximate family} if there exists $\bh_0$ such that for every $\bh\succeq \bh_0$ there is a set $Y_{\bh}\subseteq G$ with
\[
|Y_{\bh}|\le \ell
\qquad\text{and}\qquad
r(\bh\cdot \cA+B)\subseteq Y_{\bh}+\bh\cdot \cA+B.
\]
\end{definition}

\subsection{Main results}

Nathanson proved in \cite{Nathanson2018} that every finite subset of an abelian group is an asymptotic approximate group. Biswas--Moens later gave a different proof, improved the published quantitative bounds, and extended the one-color theory to unions of unbounded linear sets and, after refinement, to semilinear sets \cite{BiswasMoens2022102330}. The present paper develops a chromatic analogue of this theory. In addition, our geometric covering argument yields sharper binomial constants than the cube-based bounds that appear in the one-color literature.

Our first result is a general lifting principle: chromatic asymptotic covering follows by combining one-color asymptotic coverings across the individual color classes.

\begin{theorem}[Lifting theorem for tuples]\label{thm:colorwise}
Let $G$ be an abelian group, let $\cA=(A_1,\dots,A_q)$, and fix $r\in \N$. Assume that for each $i$, the set $A_i$ is an asymptotic $(r,\ell_i)$-approximate group. Then $\cA$ is a chromatic asymptotic $(r,\prod_{i=1}^q \ell_i)$-approximate group.
\end{theorem}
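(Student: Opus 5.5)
The plan is to combine the colorwise coverings directly, using additivity of Minkowski sums in an abelian group. For each $i$, the hypothesis gives a threshold $h_0^{(i)}\in\N$ with the following property: for every $h\ge h_0^{(i)}$ there is a set $X^{(i)}_h\subseteq G$ such that
\[
|X^{(i)}_h|\le \ell_i
\qquad\text{and}\qquad
r(hA_i)\subseteq X^{(i)}_h+hA_i.
\]
I would set the threshold vector to be $\bh_0:=(h_0^{(1)},\dots,h_0^{(q)})$. For $\bh\succeq\bh_0$ I would then define
\[
X_{\bh}:=X^{(1)}_{h_1}+\cdots+X^{(q)}_{h_q}.
\]
The size bound is immediate: a Minkowski sum of sets satisfies $|X+Y|\le|X||Y|$, so $|X_{\bh}|\le\prod_{i=1}^q\ell_i$.

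The containment is a one-line computation. Since $G$ is abelian, $(r\bh)\cdot\cA=\sum_i (rh_i)A_i=\sum_i r(h_iA_i)$. Each summand satisfies $r(h_iA_i)\subseteq X^{(i)}_{h_i}+h_iA_i$, and Minkowski sums are monotone under inclusion. Rearranging the terms (again by commutativity) gives
\[
(r\bh)\cdot\cA\subseteq \sum_i\bigl(X^{(i)}_{h_i}+h_iA_i\bigr)=X_{\bh}+\bh\cdot\cA .
\]

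The only subtlety I expect concerns the threshold convention. The hypothesis gives $h_0^{(i)}\in\N$, while Definition~\ref{def:CAAG} allows $\bh_0\in\Nzero^q$. Since we only need some threshold, choosing $h_0^{(i)}\ge1$ is harmless, and every $\bh\succeq\bh_0$ then has all coordinates $h_i\ge h_0^{(i)}$. Hence the colorwise covering applies in every coordinate, and no coordinate equal to $0$ ever needs to be handled. (Even if such a coordinate did occur, $X=\{0\}$ would work, because $r(0A_i)=\{0\}$.) The main "obstacle" is therefore only bookkeeping: checking that the identity $(rh_i)A_i=r(h_iA_i)$ holds, which it does because both sides equal the $rh_i$-fold sumset of $A_i$.
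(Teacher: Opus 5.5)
Your proposal is correct and follows essentially the same route as the paper's proof: you take the colorwise thresholds as the threshold vector, set $X_{\bh}=\sum_i X^{(i)}_{h_i}$, sum the inclusions $r(h_iA_i)\subseteq X^{(i)}_{h_i}+h_iA_i$ using $(r\bh)\cdot\cA=\sum_i r(h_iA_i)$ (the paper's Lemma~\ref{lem:basicidentity}), and bound $|X_{\bh}|$ by $\prod_i\ell_i$. Your remark on the threshold convention is a harmless extra check that the paper does not need to make explicit.
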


For finite tuples, the chromatic covering constant factors over the colors. The one-color result is the binomial bound $\lambda_r(k)=\binom{(r+1)(k-1)}{k-1}$.

\begin{theorem}[Finite-tuple covering theorem]\label{thm:finitecolor}
Let $\cA=(A_1,\dots,A_q)$ be a tuple of finite subsets of an abelian group $G$, and write
\[
|A_i|=k_i
\qquad (1\le i\le q).
\]
Then for every $r\in \N$ and every $\bh\in \Nzero^q$, there exists a set $X_{\bh}\subseteq G$ such that
\[
(r\bh)\cdot \cA\subseteq X_{\bh}+\bh\cdot \cA
\]
and
\[
|X_{\bh}|
\le
\prod_{i=1}^q \lambda_r(k_i)
=
\prod_{i=1}^q \binom{(r+1)(k_i-1)}{k_i-1}.
\]
In particular, every finite tuple is a chromatic $(r,\ell)$-approximate group with
\[
\ell=
\prod_{i=1}^q \binom{(r+1)(k_i-1)}{k_i-1}.
\]
\end{theorem}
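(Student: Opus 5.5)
We first reduce to a single color. Since $G$ is abelian,
\[
(r\bh)\cdot\cA=\sum_{i=1}^q (rh_i)A_i, \qquad \bh\cdot\cA=\sum_{i=1}^q h_iA_i .
\]
Suppose that for each $i$ and each $h\in\Nzero$ we have a set $X_i(h)$ with
\[
(rh)A_i\subseteq X_i(h)+hA_i, \qquad |X_i(h)|\le\lambda_r(k_i).
\]
Then we may take $X_{\bh}:=X_1(h_1)+\cdots+X_q(h_q)$. Summing the $q$ inclusions gives $(r\bh)\cdot\cA\subseteq X_{\bh}+\bh\cdot\cA$. Moreover $|X_{\bh}|\le\prod_i|X_i(h_i)|\le\prod_i\lambda_r(k_i)$. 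This is the same combination step as in Theorem~\ref{thm:colorwise}, except that here the one-color bound holds for every $h$ rather than eventually. The problem therefore reduces to the one-color statement. For a finite set $A=\{a_1,\dots,a_k\}$ and every $h\ge 0$, we need
\[
(rh)A\subseteq X+hA, \qquad |X|\le\binom{(r+1)(k-1)}{k-1}.
\]
The case $h=0$ is trivial with $X=\{0\}$, and the case $k=1$ is trivial as well.

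For the one-color statement we parametrize sums by exponent vectors. Every element of $mA$ has the form $\sum_j n_ja_j$ with $n\in\Nzero^k$ and $|n|:=\sum_j n_j=m$. Set
\[
\Delta_m:=\{n\in\Nzero^k:|n|=m\}.
\]
It suffices to find a set $T\subseteq\Delta_{(r-1)h}$ with $|T|\le\lambda_r(k)$ such that $\Delta_{rh}\subseteq T+\Delta_h$; we then take $X:=\{\sum_j t_ja_j : t\in T\}$. Given $n\in\Delta_{rh}$, we want to split $n=t+s$ with $s\in\Delta_h$, while forcing $t$ into a small canonical family. The plan is a greedy/lexicographic choice of $s$: take $s$ to be the lexicographically largest vector with $s\le n$ coordinatewise and $|s|=h$. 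Concretely, fill $s_1=\min(n_1,h)$, then $s_2=\min(n_2,h-s_1)$, and so on.

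With this choice, the remainder $t=n-s$ has a rigid shape. There is an index $p$ such that $t_j=0$ for $j<p$, $t_p=n_p-s_p\ge 0$ is partial, and $t_j=n_j$ for $j>p$. This alone does not bound the number of possible $t$. So instead we plan to choose $s$ adaptively so that $t$ lands in a set of "reduced" remainders, for example those with $t_j<h$ for all but one coordinate. The coarser naive count of such shapes is of cube type, roughly $(\text{const})^{k}$ independent of $h$. The target $\binom{(r+1)(k-1)}{k-1}$ is exactly the number of lattice points $u\in\Nzero^{k-1}$ with $|u|\le r(k-1)$, equivalently the compositions counted by $\Delta_{r(k-1)}$ in $k$ parts. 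This suggests the intended geometric statement: scaling $\Delta_{rh}$ by $1/h$ gives the dilated simplex $r\Delta$. Translates of the unit simplex by the integer points of $\Delta_{r-1}$ do not cover $r\Delta$, because the inverted simplices are missed. The fix is to allow translates indexed by a slightly larger set of offsets, namely vectors $\tau\in\Nzero^{k}$ whose coordinates are multiples related to $h$, with normalized total at most $r(k-1)$.

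The key step, and the main obstacle, is this covering lemma: every $n\in\Delta_{rh}$ can be written as $n=t+s$ with $s\in\Delta_h$ and $t$ in a set of at most $\binom{(r+1)(k-1)}{k-1}$ vectors, uniformly in $h$. My first attempt would be to write $n_j=hq_j+\rho_j$ with $0\le\rho_j<h$. The quotient vector $q$ then satisfies $|q|\le r$. The residue vector $\rho$ has $|\rho|=h(r-|q|)$, so $|\rho|/h$ is an integer $c\le k-1$. We would then take $t=hq'$ for a suitable $q'$ obtained from $q$ by borrowing from $\rho$, and absorb the rest into $s$. If the borrowing cannot be made exact, we would allow $X$ to include translates $hu$ with $u$ ranging over $\Nzero^{k}$ with $|u|\le r+k-1$ and one coordinate normalized away. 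Translation invariance lets us discard one coordinate, and this is precisely what produces the binomial count $\binom{(r+1)(k-1)}{k-1}$ after shifting $X$ by a fixed multiple of $a_k$. Verifying that the borrowing always succeeds, and that the index set has exactly that cardinality, is the part I expect to require the most care.
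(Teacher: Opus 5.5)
\textbf{There is a genuine gap: the covering lemma is left open, and your proposed mechanism for it cannot work.} Your reductions are correct and match the paper's: the colorwise sum $X_{\bh}=\sum_i X_i(h_i)$, and pushing a covering $\Delta_{rh}\subseteq T+\Delta_h$ in $\Z^k$ forward to $G$ via $n\mapsto\sum_j n_ja_j$. But all of the content of the theorem sits in that covering lemma.

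Both your ``borrowing'' plan ($t=hq'$) and your fallback (translates $hu$, shifted only along $e_k$) use translates whose first $k-1$ coordinates lie in $h\Z$. Such translates never suffice. Take $k=3$, $r\ge 2$, $h\ge 3$, and $n=(h-1,h-1,(r-2)h+2)\in\Delta_{rh}$. If $n-t\in\Delta_h$ with $t_1,t_2\in h\Z$, then $t_1,t_2\le h-1$ forces $t_1,t_2\le 0$. Hence $(n-t)_1+(n-t)_2\ge 2h-2>h$, a contradiction. The underlying obstruction is that residues modulo $h$ in $k-1$ free coordinates can total almost $(k-1)h$, while one copy of $\Delta_h$ absorbs only $h$. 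No borrowing on the $h$-lattice repairs this. Your count is also off: $u\in\Nzero^{k-1}$ with $|u|\le r+k-1$ gives $\binom{r+2k-2}{k-1}$, which is $r+2$ rather than $r+1$ when $k=2$.

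\textbf{The missing idea is to refine the grid to mesh $h/(k-1)$.} Take $k\ge 2$, $h\ge 1$, and $d=k-1$. Write $n=(x,rh-|x|)$ with $x\in\Nzero^d$ and $|x|\le rh$. Put $m_i=\lfloor dx_i/h\rfloor$ and $u_i=\lceil hm_i/d\rceil$. Since $hm_i/d\le x_i$ and $x_i$ is an integer, $u_i\le x_i$. Also $m_i>dx_i/h-1$ gives $u_i>x_i-h/d$. Hence $0\le x_i-u_i<h/d$, so $|x-u|<h$, and
\[
n=\bigl(u,\,rh-|u|-h\bigr)+\bigl(x-u,\,h-|x-u|\bigr),
\]
where the second summand lies in $\Delta_h$. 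The first summand depends only on $m$, and $|m|\le d|x|/h\le dr$. So at most $\binom{dr+d}{d}=\binom{(r+1)(k-1)}{k-1}$ translates are needed. This is exactly the lattice-point count you identified, now realized as an actual covering; it is the content of Lemma~\ref{lem:latticecover} and Proposition~\ref{prop:freefinite}.

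These translates can have a negative last coordinate. For example, $x=(rh,0,\dots,0)$ yields the translate $(rh,0,\dots,0,-h)$. Your requirement $T\subseteq\Delta_{(r-1)h}$ is therefore an unnecessary constraint that the natural construction violates; $X$ may be any subset of $G$.
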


We next obtain exact structure when each color class is a translate of a submonoid.

\begin{theorem}\label{thm:submonoid}
Let $r\in \N$. Let $G$ be an abelian group. Suppose that for each color $i$,
\[
A_i=a_i+M_i,
\]
where $a_i\in G$ and $M_i\subseteq G$ is a submonoid, meaning that $0\in M_i$ and $M_i+M_i\subseteq M_i$. Then for every $\bh=(h_1,\dots,h_q)\in \Nzero^q$,
\[
(r\bh)\cdot \cA
=
(r-1)\sum_{i=1}^q h_i a_i+\bh\cdot \cA.
\]
Consequently $\cA$ is a chromatic $(r,1)$-approximate group.
\end{theorem}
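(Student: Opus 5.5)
The whole argument rests on one fact: for a submonoid $M$ and any $h\in\Nzero$ we have $hM=M$ when $h\ge 1$, and $0M=\{0\}$. I will establish this first. If $h\ge1$, then $M=M+0+\cdots+0\subseteq hM$ because $0\in M$. The reverse inclusion $hM\subseteq M$ follows by induction on $h$ from $M+M\subseteq M$. The case $h=0$ holds by the convention $0A=\{0\}$.

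Next I compute a single color class. For $A_i=a_i+M_i$ and $h_i\ge1$,
\[
h_iA_i=h_ia_i+h_iM_i=h_ia_i+M_i .
\]
For $h_i=0$ we get $h_iA_i=\{0\}$. Now apply this with $h_i$ replaced by $rh_i$. When $h_i\ge1$ we also have $rh_i\ge1$, so
\[
(rh_i)A_i=rh_ia_i+M_i=(r-1)h_ia_i+\bigl(h_ia_i+M_i\bigr)=(r-1)h_ia_i+h_iA_i .
\]
When $h_i=0$, both sides equal $\{0\}$, since $(r-1)\cdot0\cdot a_i=0$. So the identity $(rh_i)A_i=(r-1)h_ia_i+h_iA_i$ holds for every $h_i\in\Nzero$. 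The case split at $h_i=0$ is the only delicate point: there $h_iM_i=\{0\}$ rather than $M_i$, and both sides must be checked to collapse to $\{0\}$.

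Summing over $i=1,\dots,q$ and using that Minkowski addition in an abelian group is associative and commutative, with translates pulled out, gives
\[
(r\bh)\cdot\cA=\sum_i (rh_i)A_i=(r-1)\sum_i h_ia_i+\sum_i h_iA_i=(r-1)\sum_{i=1}^q h_ia_i+\bh\cdot\cA .
\]

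For the consequence, take $X_{\bh}=\{(r-1)\sum_i h_ia_i\}$. This set has one element and gives the required inclusion, in fact an equality, for every $\bh\in\Nzero^q$. So $\cA$ is a chromatic $(r,1)$-approximate group in the sense of Definition~\ref{def:CAG}.
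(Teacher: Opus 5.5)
Your proof is correct and follows essentially the same route as the paper: both rest on $hM_i=M_i$ for $h\ge 1$ (with $0A_i=\{0\}$) and take the singleton $X_{\bh}=\{(r-1)\sum_i h_ia_i\}$. The only difference is cosmetic: you prove $(rh_i)A_i=(r-1)h_ia_i+h_iA_i$ one color at a time and then sum, whereas the paper first writes $\bh\cdot\cA=\alpha_{\bh}+M_{\bh}$ and $(r\bh)\cdot\cA=r\alpha_{\bh}+M_{\bh}$ and compares these directly.
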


More generally, one can adjoin finite cores to the colors while retaining a uniform chromatic covering theorem.

\begin{theorem}\label{thm:finiteplusmonoid}
Let $G$ be an abelian group. Suppose that for each color $i$,
\[
A_i=F_i+M_i,
\]
where $F_i\subseteq G$ is a finite nonempty set, $M_i\subseteq G$ is a submonoid, and
\[
k_i:=|F_i|.
\]
Then for every $r\in \N$ and every $\bh=(h_1,\dots,h_q)\in \Nzero^q$, there exists a set $X_{\bh}\subseteq G$ such that
\[
(r\bh)\cdot \cA\subseteq X_{\bh}+\bh\cdot \cA
\]
and
\[
|X_{\bh}|
\le
\prod_{i=1}^q \lambda_r(k_i)
=
\prod_{i=1}^q \binom{(r+1)(k_i-1)}{k_i-1}.
\]
In particular, $\cA$ is a chromatic $(r,\ell)$-approximate group with
\[
\ell=
\prod_{i=1}^q \binom{(r+1)(k_i-1)}{k_i-1}.
\]
\end{theorem}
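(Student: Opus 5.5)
The plan is to reduce to the finite-tuple covering theorem (Theorem~\ref{thm:finitecolor}) applied to the cores $\mathcal F=(F_1,\dots,F_q)$, and then absorb the monoid parts using the submonoid identity behind Theorem~\ref{thm:submonoid}. The key algebraic fact is the following: for a submonoid $M$ and any $h\ge 1$, we have $hM=M$. This holds because $0\in M$ gives $M\subseteq hM$, and closure under addition gives $hM\subseteq M$. For $h=0$ we have $0M=\{0\}$.

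First I compute $h_iA_i$ for each color. Since $G$ is abelian, $h_i(F_i+M_i)=h_iF_i+h_iM_i$. For $h_i\ge1$ this equals $h_iF_i+M_i$, and for $h_i=0$ both sides are $\{0\}$. Set $M_{\bh}:=\sum_{i:\,h_i\ge1}M_i$. Then
\[
\bh\cdot\cA=\bh\cdot\mathcal F+M_{\bh}.
\]
Because $rh_i\ge1$ exactly when $h_i\ge1$, the same support set $\{i:h_i\ge1\}$ appears for $r\bh$. Hence $(r\bh)\cdot\cA=(r\bh)\cdot\mathcal F+M_{\bh}$, with the same monoid $M_{\bh}$.

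Next I apply Theorem~\ref{thm:finitecolor} to the finite tuple $\mathcal F$ with $|F_i|=k_i$. It provides $X_{\bh}$ with $|X_{\bh}|\le\prod_i\lambda_r(k_i)$ and $(r\bh)\cdot\mathcal F\subseteq X_{\bh}+\bh\cdot\mathcal F$. Adding $M_{\bh}$ to both sides gives
\[
(r\bh)\cdot\cA=(r\bh)\cdot\mathcal F+M_{\bh}\subseteq X_{\bh}+\bh\cdot\mathcal F+M_{\bh}=X_{\bh}+\bh\cdot\cA,
\]
which is the required covering with the stated bound. Since this holds for every $\bh\in\Nzero^q$, the ``in particular'' clause follows directly from Definition~\ref{def:CAG}.

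The only point needing care is the bookkeeping of colors with $h_i=0$. There $h_iM_i=\{0\}$ rather than $M_i$, so I cannot simply write $\bh\cdot\mathcal F+\sum_iM_i$. Defining $M_{\bh}$ via the support of $\bh$, and noting that $r\bh$ has the same support, resolves this. Beyond that, the argument relies entirely on the previously established finite bound, so I expect no real obstacle.
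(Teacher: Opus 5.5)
Your proposal is correct and is essentially the paper's proof. The paper absorbs the monoid color by color, using $h_iA_i=h_iF_i+M_i$ for $h_i\ge 1$ together with Proposition~\ref{prop:finiteabelian}, and then sums over the colors; you instead aggregate the monoids into $M_{\bh}$ first and invoke Theorem~\ref{thm:finitecolor} once, which is only a repackaging of the same argument with the same bound.
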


The next theorem treats semilinear color classes. In the one-color case, Biswas--Moens proved asymptotic approximate-group structure for unions of unbounded linear sets \cite[Theorem 4.3]{BiswasMoens2022102330}. Here we obtain a chromatic version, together with the same sharper binomial constant furnished by the direct simplex-covering argument.

\begin{theorem}[Chromatic theorem for unions of unbounded linear sets]\label{thm:unboundedlinear}
Let $\cA=(A_1,\dots,A_q)$ be a tuple of subsets of an abelian group $G$. Assume that for each color $i$, the set $A_i$ is a union of $k_i$ unbounded linear sets. Then for every $r\in \N$, the tuple $\cA$ is a chromatic asymptotic $(r,\ell)$-approximate group with
\[
\ell=
\prod_{i=1}^q \lambda_r(k_i)
=
\prod_{i=1}^q \binom{(r+1)(k_i-1)}{k_i-1}.
\]
\end{theorem}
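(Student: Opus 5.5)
The plan is to reduce to one color and then apply Theorem~\ref{thm:colorwise}. If each $A_i$, a union of $k_i$ unbounded linear sets, is an asymptotic $(r,\lambda_r(k_i))$-approximate group, then the lifting theorem gives at once that $\cA$ is a chromatic asymptotic $(r,\prod_i\lambda_r(k_i))$-approximate group. So the work is a one-color statement. Let $A=\bigcup_{j=1}^k (a_j+M_j)$, where $M_j$ is the submonoid generated by the finitely many differences of the $j$-th progression, so that $P(a_j;b_{j,1},\dots,b_{j,d_j})=a_j+M_j$. We must show that $A$ is an asymptotic $(r,\lambda_r(k))$-approximate group.

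\textbf{Step 1 (combinatorial description of $hA$).} Since each $M_j$ is a monoid, $M_j+M_j=M_j$. Expanding $hA$ by how many summands come from each piece gives
\[
hA=\bigcup_{\mathbf n\in\Nzero^k,\ |\mathbf n|=h}\Bigl(\mathbf n\cdot\ba+\sum_{j\in\operatorname{supp}\mathbf n}M_j\Bigr),
\]
where $\ba=(a_1,\dots,a_k)$. The same formula holds for $rhA$, with $\mathbf m$ ranging over vectors with $|\mathbf m|=rh$.

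\textbf{Step 2 (reduction to a covering statement).} The piece indexed by $\mathbf m$ lies in $(\mathbf m-\mathbf n)\cdot\ba+(\text{piece indexed by }\mathbf n)$ whenever two conditions hold: $\mathbf n\preceq\mathbf m$ (so that $\mathbf m-\mathbf n\in\Nzero^k$), and $\operatorname{supp}\mathbf m\subseteq\operatorname{supp}\mathbf n$ (so that $\sum_{\operatorname{supp}\mathbf m}M_j\subseteq\sum_{\operatorname{supp}\mathbf n}M_j$). Under these conditions the supports are in fact equal. It therefore suffices to find a set $V_h\subseteq\Nzero^k$ of vectors of total $(r-1)h$ with $|V_h|\le\lambda_r(k)$ such that every $\mathbf m$ with $|\mathbf m|=rh$ can be written as $\mathbf m=\mathbf n+\mathbf v$, with $\mathbf v\in V_h$, $|\mathbf n|=h$, and $n_j\ge1$ whenever $m_j\ge1$. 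Then $X_h:=\{\mathbf v\cdot\ba:\mathbf v\in V_h\}$ works.

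\textbf{Step 3 (simplex covering).} For $V_h$ I will reuse the simplex-covering construction that gives the bound $\lambda_r(k)$ in Theorem~\ref{thm:finitecolor}. There, the count $\binom{(r+1)(k-1)}{k-1}$ is the number of lattice points $\mathbf u\in\Nzero^k$ with $|\mathbf u|=r(k-1)$. These lattice points index a covering of the dilated simplex $rh\Delta$ by translates of $h\Delta$, which is exactly the decomposition $\mathbf m=\mathbf n+\mathbf v$ without the support condition. The finite case needs no support condition, so that step can be taken over verbatim.

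\textbf{Main obstacle: the support condition.} The hard part is ensuring $n_j\ge1$ on $\operatorname{supp}\mathbf m$. This is precisely why the result is only asymptotic. My first attempt is to reserve one unit in each occupied coordinate. Write $\mathbf m=\mathbf e_S+\mathbf m'$, where $\mathbf e_S$ is the indicator vector of $S=\operatorname{supp}\mathbf m$. Then apply the covering to $\mathbf m'$ at the slightly shifted level $|\mathbf m'|=rh-|S|$, adding $\mathbf e_S$ back into $\mathbf n$. The difficulty is that the translate vectors must not depend on $S$, or the count would inflate by a factor up to $2^k$. I would handle this by choosing $h_0$ large (of order $rk$) and checking that the covering vectors $\mathbf v$ attached to a point $\mathbf m$ can always be chosen with $v_j\le m_j-1$ on $S$. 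In a fine enough simplex covering, the covering translate of $\mathbf m$ sits "below" $\mathbf m$ coordinatewise with slack at least one in every positive coordinate once $h$ exceeds the granularity $k-1$. If this slack argument fails near the boundary faces of the simplex, the fallback is to induct on faces: points with $m_j=0$ lie in a lower-dimensional simplex, where the same construction in dimension $|S|$ uses no more than $\lambda_r(k)$ translates, and the translate sets can be nested consistently across faces.
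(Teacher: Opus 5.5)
Your outer reduction (prove the one-color statement, then apply Theorem~\ref{thm:colorwise}) is exactly how the paper argues, and Steps~1--2 are a valid sufficient criterion. There is a genuine gap, however, in the one-color covering with $\lambda_r(k)$ translates. That covering is the whole content: the bound $\lambda_r(k)$ for unions of linear sets is itself a result of the paper (Theorem~\ref{thm:onecolorlinear}), not an off-the-shelf input, and you only sketch it with justifications that fail.

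\begin{itemize}
\item \textbf{The finite-case covering does not transfer.} The covering behind Proposition~\ref{prop:freefinite} cannot be ``taken over verbatim'' into your Step~2. Its translates are $(\mathbf u,(r-1)h-|\mathbf u|)$ with $|\mathbf u|$ as large as $rh$. For instance, it writes $rh\mathbf e_1=(rh\mathbf e_1-h\mathbf e_k)+h\mathbf e_k$. The translate has a negative coordinate, and the piece $rha_1+M_1$ gets compared with $ha_k+M_k$, which is useless when $M_1\neq M_k$.
\item \textbf{The slack claim is false.} It fails away from the boundary and for every $h$. The first $k-1$ coordinates of $\mathbf n$ there are $x_i-\lceil h\lfloor (k-1)x_i/h\rfloor/(k-1)\rceil$, and these vanish whenever $x_i$ is a positive multiple of $h/(k-1)$.
\item \textbf{The face-by-face fallback does not control the count.} Separate coverings of the faces cost $\sum_{\emptyset\neq S\subseteq[1,k]}\lambda_r(|S|)>\lambda_r(k)$; for $k=2$ this is $r+3$ against $r+1$. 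Moreover, for $k=2$, $r\ge 2$, your strict problem already needs at least $r+1=\lambda_r(2)$ translates. Two are forced at the vertices, and every other translate covers only $h-1$ of the remaining $(r-2)h+1$ points. So there is no slack, and the ``consistent nesting'' is precisely the unproved statement.
\end{itemize}

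The missing idea is that the first condition in your Step~2, $\mathbf n\preceq\mathbf m$, is unnecessary: $(\mathbf m-\mathbf n)\cdot\ba$ is a legitimate element of $G$ whatever its signs. That self-imposed constraint is what forces $\operatorname{supp}\mathbf n=\operatorname{supp}\mathbf m$. Drop it and take $\mathbf n$ with \emph{full} support.

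\begin{itemize}
\item Since $0\in M_j$, every piece satisfies $\mathbf m\cdot\ba+\sum_{j\in\operatorname{supp}\mathbf m}M_j\subseteq \mathbf m\cdot\ba+M$, where $M:=M_1+\cdots+M_k$.
\item For $\mathbf n\in\N^k$ with coordinate sum $h$, one has $\mathbf n\cdot\ba+M=\sum_j n_j(a_j+M_j)\subseteq hA$.
\item So it suffices to cover $\{\mathbf m\in\Nzero^k:|\mathbf m|=rh\}$ by arbitrary $\Z^k$-translates of the positive shell $\{\mathbf n\in\N^k:|\mathbf n|=h\}$.
\end{itemize}

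This is your ``reserve one unit'' idea applied to \emph{every} coordinate of $\mathbf n$ rather than to $\operatorname{supp}\mathbf m$. As a result the translate set no longer depends on supports. The paper does this in Proposition~\ref{prop:positiveshelltuple}, using Lemma~\ref{lem:latticecover} with $R=rh$, $t=h-(k-1)$ and $g(z)=(z_1+1,\dots,z_{k-1}+1,\,h-(k-1)-(z_1+\cdots+z_{k-1}))$. For $h\ge r(k-1)^2+k$ one has $\lfloor (k-1)rh/(h-k+1)\rfloor\le r(k-1)$, so the count stays $\lambda_r(k)$. This yields Theorem~\ref{thm:onecolorlinear}, and Theorem~\ref{thm:colorwise} then finishes exactly as you planned.
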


We also obtain an inhomogeneous extension.

\begin{theorem}\label{thm:inhomogeneous}
Let $\cA=(A_1,\dots,A_q)$ be a chromatic asymptotic $(r,\ell)$-approximate group in an abelian group $G$.
Let $B\subseteq G$ be a nonempty finite set of size $m$.
Then the family
\[
S_{\bh}:=\bh\cdot \cA+B
\]
is an asymptotic
\[
\left(r,\ell\,\lambda_r(m)\right)\text{-approximate family},
\]
that is, an asymptotic
\[
\left(r,\ell\binom{(r+1)(m-1)}{m-1}\right)\text{-approximate family}.
\]
\end{theorem}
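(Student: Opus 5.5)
Since $G$ is abelian, the statement reduces to two covering steps: one for the $\bh\cdot\cA$ part, which is the hypothesis, and one for the finite set $B$, which is the one-color finite case of Theorem~\ref{thm:finitecolor}.

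We expand the dilated sumset as
\[
r(\bh\cdot \cA+B)=r(\bh\cdot\cA)+rB=(r\bh)\cdot\cA+rB .
\]

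\emph{Step 1 (the chromatic part).} Let $\bh_0$ be the threshold vector from Definition~\ref{def:CAAG} for $\cA$. For every $\bh\succeq \bh_0$ this gives a set $X_{\bh}$ with $|X_{\bh}|\le \ell$ and
\[
(r\bh)\cdot\cA\subseteq X_{\bh}+\bh\cdot\cA .
\]

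\emph{Step 2 (the finite set $B$).} We apply Theorem~\ref{thm:finitecolor} with $q=1$, $\cA=(B)$ and $\bh=(1)$. This yields a set $Z\subseteq G$, independent of $\bh$, with $|Z|\le\lambda_r(m)=\binom{(r+1)(m-1)}{m-1}$ and
\[
rB\subseteq Z+B .
\]

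\emph{Step 3 (combining).} We set $Y_{\bh}:=X_{\bh}+Z$, so that $|Y_{\bh}|\le |X_{\bh}|\,|Z|\le \ell\,\lambda_r(m)$. Then
\[
r(\bh\cdot\cA+B)=(r\bh)\cdot\cA+rB\subseteq (X_{\bh}+\bh\cdot\cA)+(Z+B)=Y_{\bh}+\bh\cdot\cA+B .
\]
Here we use only commutativity and associativity of Minkowski sums together with monotonicity of $+$ under inclusion. The same threshold $\bh_0$ works, which gives the asymptotic $(r,\ell\lambda_r(m))$-approximate family property.

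The only step needing care is the first identity. We must check that $r(\bh\cdot\cA+B)$ really equals $(r\bh)\cdot\cA+rB$, i.e.\ that the $r$-fold sumset distributes over the Minkowski sum in an abelian group. This holds because $r(X+Y)=rX+rY$: both sides consist of the sums $\sum_{j=1}^r(x_j+y_j)$ with $x_j\in X$, $y_j\in Y$, rearranged. The paper's introduction already notes the special case $r(\bh\cdot\cA)=(r\bh)\cdot\cA$. Everything else rests on Theorem~\ref{thm:finitecolor}, which we take as given.
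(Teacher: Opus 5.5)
Your proposal is correct and matches the paper's proof essentially step for step. The paper also expands $r(\bh\cdot\cA+B)=r(\bh\cdot\cA)+rB$, takes $Z$ with $rB\subseteq Z+B$ and $|Z|\le\lambda_r(m)$, sets $Y_{\bh}:=X_{\bh}+Z$, and keeps the same threshold $\bh_0$. The only difference is that it obtains $Z$ from Proposition~\ref{prop:finiteabelian} with $h=1$, which is exactly the $q=1$, $\bh=(1)$ case of Theorem~\ref{thm:finitecolor} that you cite.
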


Finally, we study threshold chromatic layers over the integers. Nathanson's theorem gives an eventual interval-plus-edges description of the threshold-$t$ chromatic layers for normalized finite tuples. We show that these layers themselves satisfy an asymptotic covering theorem with a uniform bound independent of the color parameter.

\begin{theorem}\label{thm:trich}
Let $\cA=(A_1,\dots,A_q)$ be a normalized tuple of finite sets of integers with $a_i^*=\max A_i\ge 1$ for every $i$. Fix $t,r\in \N$. Then there exists a threshold vector $\bh_{r,t}$ such that for every $\bh\succeq \bh_{r,t}$, there exists a set $X_{\bh}\subseteq \Z$ with
\[
|X_{\bh}|\le r+2
\]
and
\[
((r\bh)\cdot \cA)^{(t)}
\subseteq
X_{\bh}+(\bh\cdot \cA)^{(t)}.
\]
In other words, for each fixed $t$, the family of threshold-$t$ chromatic layers forms an asymptotic $(r,r+2)$-approximate family.
\end{theorem}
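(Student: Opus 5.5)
We rely on Nathanson's eventual interval-plus-edges description of threshold layers for normalized finite tuples, and otherwise just count. Write $\sigma(\bh):=\sum_i h_i a_i^*=\max(\bh\cdot\cA)$. The form of Nathanson's theorem we use is the following. There exist finite sets $C,D\subseteq\Nzero$, integers $c,d\ge 0$, and a threshold vector $\bh_1$ (all depending on $\cA$ and $t$) such that for every $\bh\succeq\bh_1$,
\[
(\bh\cdot\cA)^{(t)} = C\cup [c,\sigma(\bh)-d]\cup(\sigma(\bh)-D),
\]
with $C\subseteq[0,c-2]$ and $D\subseteq[0,d-2]$. The edge sets $C$ and $D$ are independent of $\bh$ once $\bh$ is large. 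If the paper's version states the edges only as fixed patterns depending on $t$ and $\cA$, the argument below is unchanged. We also use that $r\bh\succeq\bh\succeq\bh_1$, so the same description applies to $(r\bh)\cdot\cA$ with $\sigma(r\bh)=r\sigma(\bh)$.

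The covering step is then elementary. Set $L:=[c,\sigma(\bh)-d]$, an interval of length $\sigma(\bh)-c-d$. Choose $\bh_{r,t}\succeq\bh_1$ large enough that $\sigma(\bh)$ exceeds a constant $N(c,d,C,D,r)$ fixed below; this is possible because every $a_i^*\ge 1$, so $\sigma(\bh)\to\infty$ as $\bh$ grows in every coordinate.

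The target set $T:=((r\bh)\cdot\cA)^{(t)}$ splits into three pieces:
\begin{itemize}
\item the left edge $C$, which equals the left edge of the source layer and so is covered by the translate $0+(\bh\cdot\cA)^{(t)}$;
\item the right edge $r\sigma(\bh)-D$, which is the source right edge $\sigma(\bh)-D$ shifted by $(r-1)\sigma(\bh)$, so it is covered by the translate $(r-1)\sigma(\bh)+(\bh\cdot\cA)^{(t)}$;
\item the long interval $[c,r\sigma(\bh)-d]$.
\end{itemize}

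For the long interval, use translates $x_j:=j(\sigma(\bh)-c-d)$ for $j=0,1,\dots$. These place consecutive copies of $L$ at $[c+x_j,\sigma(\bh)-d+x_j]$. Consecutive copies abut, since the step equals the length of $L$. The number of translates needed is $\lceil (r\sigma(\bh)-c-d)/(\sigma(\bh)-c-d)\rceil$. Once $\sigma(\bh)$ is large enough that $r(c+d)<\sigma(\bh)-c-d$ (roughly), this number is at most $r+1$.

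The translate $x_0=0$ is also the one covering the left edge, so it serves double duty. Hence
\[
X_{\bh}:=\{x_0,\dots,x_r\}\cup\{(r-1)\sigma(\bh)\},
\]
which has at most $r+2$ elements. This matches the bound in Theorem~\ref{thm:trich}.

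The only real subtlety is the first step: pinning down Nathanson's theorem in a form where the edge sets and offsets are genuinely uniform in $\bh$, with a gap separating the edges from the interval. This is needed so that the translates of $L$ never land on, or have to fill, the gaps inside the edges of $T$. If the description only holds with edges depending on the residue pattern of $\bh$, I would instead cover $T$'s edges by the two extreme translates, exactly as above. The reason this still works is that left and right edges at threshold $t$ arise from representations using only small elements near $0$ or near $a_i^*$. So they agree between $\bh$ and $r\bh$ up to the shift $(r-1)\sigma(\bh)$, which I would verify directly from the representation function by the reflection $a\mapsto a_i^*-a$.
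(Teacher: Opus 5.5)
Your proposal is correct and follows essentially the paper's route. Both arguments split $((r\bh)\cdot\cA)^{(t)}$ via Nathanson's theorem into left edge, central interval and right edge, and cover these with the translates $0$, the $r+1$ equally spaced shifts of the central interval, and $(r-1)\sigma(\bh)$; the only difference is that the paper uses step $\sigma(\bh)-c-d+1$, so its copies are adjacent rather than sharing an endpoint, which is immaterial. Contrary to your closing paragraph, the gap condition $C\subseteq[0,c-2]$, $D\subseteq[0,d-2]$ and the residue-pattern fallback are not needed: only the inclusion into $X_{\bh}+(\bh\cdot\cA)^{(t)}$ is required, and the paper's form of Nathanson's theorem already has edge sets $C_t,D_t$ independent of $\bh$.
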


\section{Proofs}

\subsection{Basic identities and functorial properties}

\begin{lemma}\label{lem:basicidentity}
Let $G$ be abelian, let $\cA=(A_1,\dots,A_q)$, let $\bh\in \Nzero^q$, and let $r\in \N$. Then
\[
r(\bh\cdot \cA)=(r\bh)\cdot \cA.
\]
\end{lemma}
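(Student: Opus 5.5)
The identity rests on two elementary facts about Minkowski sums in an abelian group: addition of sets is associative and commutative, and $r(X+Y)=rX+rY$ for any subsets $X,Y\subseteq G$ and any $r\in\N$. The plan is to establish these and then regroup summands.

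\textbf{Step 1: distributivity of $r$ over a sum of sets.} I first prove $r(X+Y)=rX+rY$. An element of the left side has the form $\sum_{j=1}^r (x_j+y_j)$ with $x_j\in X$ and $y_j\in Y$. By commutativity and associativity in $G$ this equals
\[
\Bigl(\sum_j x_j\Bigr)+\Bigl(\sum_j y_j\Bigr)\in rX+rY.
\]
Conversely, an element $\sum_j x_j+\sum_j y_j$ of the right side can be regrouped as $\sum_j(x_j+y_j)$, which lies in $r(X+Y)$. The two pairings are compatible because both sums are indexed by the same set $\{1,\dots,r\}$. Induction on the number of summands then gives
\[
r(X_1+\cdots+X_q)=rX_1+\cdots+rX_q.
\]

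\textbf{Step 2: multiples of multiples.} I next note that $r(hA)=(rh)A$. Both sides are exactly the set of all sums of $rh$ elements of $A$, with the terms grouped differently, so equality follows from associativity. This also covers $h=0$: then $0A=\{0\}$ and $r\{0\}=\{0\}=0A$.

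\textbf{Step 3: combine.} Putting these together,
\[
r(\bh\cdot\cA)=r\Bigl(\sum_i h_iA_i\Bigr)=\sum_i r(h_iA_i)=\sum_i (rh_i)A_i=(r\bh)\cdot\cA .
\]

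There is no real obstacle here. The only point needing care is that $r(X+Y)\neq rX+rY$ would fail in a nonabelian setting, so commutativity must be invoked explicitly in Step 1, and the convention $0A=\{0\}$ must be checked to be consistent when some $h_i=0$.
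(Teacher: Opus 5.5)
Your proposal is correct and follows essentially the same route as the paper: you distribute the $r$-fold sum across the color classes using commutativity, and then apply $r(h_iA_i)=(rh_i)A_i$ to each color. Your explicit check of the $h_i=0$ case via the convention $0A=\{0\}$ is a small detail that the paper leaves implicit.
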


\begin{proof}
By the definition of repeated sumset,
\[
r(\bh\cdot \cA)=r(h_1A_1+\cdots+h_qA_q).
\]
Because the group is abelian, repeated sumsets distribute across finite sums:
\[
r(X_1+\cdots+X_q)=rX_1+\cdots+rX_q.
\]
Indeed, every element of the left-hand side has the form
\[
(x_{1,1}+\cdots+x_{q,1})+\cdots+(x_{1,r}+\cdots+x_{q,r})
=
(x_{1,1}+\cdots+x_{1,r})+\cdots+(x_{q,1}+\cdots+x_{q,r}),
\]
with $x_{i,j}\in X_i$, and conversely every element of $rX_1+\cdots+rX_q$ has this form.
Applying this with $X_i=h_iA_i$, we obtain
\[
r(h_1A_1+\cdots+h_qA_q)=r(h_1A_1)+\cdots+r(h_qA_q).
\]
But $r(h_iA_i)=rh_iA_i$, so
\[
r(\bh\cdot \cA)=rh_1A_1+\cdots+rh_qA_q=(r\bh)\cdot \cA.
\]
This proves the claim.
\end{proof}

\begin{lemma}
If $\cA$ is a CAAG with threshold $\bh_0$, then it is also a CAAG with any larger threshold $\bh_1\succeq \bh_0$.
\end{lemma}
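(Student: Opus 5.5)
The claim follows directly from Definition~\ref{def:CAAG} together with the transitivity of the partial order $\precc$ on $\Nzero^q$. The plan is to keep the same constants $r,\ell$ and reuse the covering sets supplied by the threshold $\bh_0$.

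Assume $\cA$ is a CAAG with threshold $\bh_0$. For every $\bh\succeq \bh_0$ this gives a set $X_{\bh}$ with $|X_{\bh}|\le \ell$ and
\[
(r\bh)\cdot \cA\subseteq X_{\bh}+\bh\cdot \cA.
\]
Fix any $\bh_1\succeq \bh_0$, and take an arbitrary $\bh\succeq \bh_1$. Coordinatewise we have $h_i\ge (h_1)_i\ge (h_0)_i$ for each $i$. By transitivity of $\le$ on each coordinate, $\bh\succeq \bh_0$.

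Therefore the set $X_{\bh}$ provided for the threshold $\bh_0$ already satisfies $|X_{\bh}|\le \ell$ and $(r\bh)\cdot \cA\subseteq X_{\bh}+\bh\cdot \cA$. Since $\bh\succeq\bh_1$ was arbitrary, $\bh_1$ is a valid threshold vector in Definition~\ref{def:CAAG}, with the same parameters $(r,\ell)$.

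There is no real obstacle here. The only point needing care is that the order $\precc$ is the product order, so transitivity must be checked componentwise; it is nonetheless a genuine partial order. Also, the family $\{X_{\bh}\}$ for the new threshold is simply the restriction of the old family to the smaller index set $\{\bh:\bh\succeq\bh_1\}\subseteq\{\bh:\bh\succeq\bh_0\}$.
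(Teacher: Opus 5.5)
Your proof is correct and is the same argument as the paper's, which says in one line that the covering condition for all $\bh\succeq\bh_0$ in particular holds for all $\bh\succeq\bh_1$. You spell out the transitivity of the coordinatewise order and reuse the covering sets $X_{\bh}$ unchanged, which the paper leaves implicit.
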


\begin{proof}
This follows directly from the definition: if the covering condition holds for all $\bh\succeq \bh_0$, then it certainly holds for all $\bh\succeq \bh_1$.
\end{proof}

\begin{proposition}\label{prop:homomorphic}
Let $\phi:G\to H$ be a homomorphism of abelian groups. If $\cA=(A_1,\dots,A_q)$ is a chromatic asymptotic $(r,\ell)$-approximate group in $G$, then
\[
\phi(\cA):=(\phi(A_1),\dots,\phi(A_q))
\]
is a chromatic asymptotic $(r,\ell)$-approximate group in $H$.
\end{proposition}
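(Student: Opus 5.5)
The plan is to push the covering inclusion forward along $\phi$, using that homomorphisms commute with Minkowski sums and with $h$-fold sumsets. First I will record the elementary identity
\[
\phi(X+Y)=\phi(X)+\phi(Y)
\qquad (X,Y\subseteq G),
\]
which follows directly from $\phi(x+y)=\phi(x)+\phi(y)$: the inclusion $\subseteq$ is immediate, and for $\supseteq$ any $\phi(x)+\phi(y)$ is the image of $x+y$. By induction on the number of summands, this gives $\phi(hA)=h\,\phi(A)$ for every $h\in\Nzero$ (with $\phi(\{0\})=\{0\}$ for $h=0$). Summing over colors, I obtain for every $\bh\in\Nzero^q$
\[
\phi(\bh\cdot\cA)=h_1\phi(A_1)+\cdots+h_q\phi(A_q)=\bh\cdot\phi(\cA).
\]

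Next I will fix the threshold $\bh_0$ from the CAAG hypothesis on $\cA$ and use the same threshold for $\phi(\cA)$. For $\bh\succeq\bh_0$, choose $X_{\bh}\subseteq G$ with $|X_{\bh}|\le\ell$ and $(r\bh)\cdot\cA\subseteq X_{\bh}+\bh\cdot\cA$. Applying $\phi$ preserves inclusions, so together with the identity above this gives
\[
(r\bh)\cdot\phi(\cA)=\phi\bigl((r\bh)\cdot\cA\bigr)\subseteq \phi(X_{\bh})+\phi(\bh\cdot\cA)=\phi(X_{\bh})+\bh\cdot\phi(\cA).
\]
Setting $Y_{\bh}:=\phi(X_{\bh})$, I have $|Y_{\bh}|\le|X_{\bh}|\le\ell$, since the image of a set has at most as many elements as the set. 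Finally, each $\phi(A_i)$ is nonempty because $A_i$ is, so $\phi(\cA)$ is an admissible tuple and the definition of a CAAG is satisfied.

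No step is genuinely difficult. The only point needing care is the sumset identity $\phi(hA)=h\phi(A)$, and in particular the reverse inclusion $\supseteq$, since $\phi$ may be non-injective. This causes no trouble: each element of $h\phi(A)$ is a sum $\phi(a_1)+\cdots+\phi(a_h)=\phi(a_1+\cdots+a_h)$ with $a_j\in A$, so it is a preimage-witnessed element of $\phi(hA)$. The same argument applied verbatim for all $\bh\in\Nzero^q$ shows that chromatic $(r,\ell)$-approximate groups are also preserved under homomorphisms.
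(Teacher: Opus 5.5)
Your proposal is correct and matches the paper's proof: you keep the threshold $\bh_0$, push the inclusion $(r\bh)\cdot\cA\subseteq X_{\bh}+\bh\cdot\cA$ forward via $\phi(\bh\cdot\cA)=\bh\cdot\phi(\cA)$, and use $\phi(X_{\bh})$ as the covering set, with $|\phi(X_{\bh})|\le|X_{\bh}|\le\ell$. You also spell out the sumset identity under a possibly non-injective $\phi$ and the nonemptiness of each $\phi(A_i)$, both of which the paper leaves implicit.
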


\begin{proof}
Let $\bh_0$ be a threshold for $\cA$. Fix $\bh\succeq \bh_0$. By hypothesis there exists $X_{\bh}\subseteq G$ with
\[
|X_{\bh}|\le \ell
\qquad\text{and}\qquad
(r\bh)\cdot \cA\subseteq X_{\bh}+\bh\cdot \cA.
\]
Apply $\phi$. Since homomorphisms commute with finite sumsets,
\[
\phi((r\bh)\cdot \cA)=(r\bh)\cdot \phi(\cA),
\]
and likewise
\[
\phi(\bh\cdot \cA)=\bh\cdot \phi(\cA).
\]
Therefore
\[
(r\bh)\cdot \phi(\cA)
\subseteq \phi(X_{\bh})+\bh\cdot \phi(\cA).
\]
Since $|\phi(X_{\bh})|\le |X_{\bh}|\le \ell$, the result follows.
\end{proof}

\begin{proposition}\label{prop:affine}
Let $d\in \N$, let $b_1,\dots,b_q\in G$, and define
\[
A_i':=dA_i+b_i
\qquad (1\le i\le q).
\]
If $\cA=(A_1,\dots,A_q)$ is a chromatic asymptotic $(r,\ell)$-approximate group, then
\[
\cA':=(A_1',\dots,A_q')
\]
is also a chromatic asymptotic $(r,\ell)$-approximate group.
\end{proposition}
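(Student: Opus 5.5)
Here $dA_i$ is read as the dilate $\{d a : a\in A_i\}$, i.e. the image of $A_i$ under the endomorphism $x\mapsto dx$; this is the affine-map reading the proposition suggests. The plan is to split the map $A_i\mapsto dA_i+b_i$ into a dilation and a colorwise translation and handle each separately. Multiplication by $d$, $\phi_d:G\to G$, $x\mapsto dx$, is a homomorphism because $G$ is abelian. Proposition~\ref{prop:homomorphic} applied to $\phi_d$ then shows that $(dA_1,\dots,dA_q)$ is a chromatic asymptotic $(r,\ell)$-approximate group with the same threshold $\bh_0$. (If instead $dA_i$ meant the $d$-fold sumset, the first step would still follow from Lemma~\ref{lem:basicidentity}, since $\bh\cdot(dA_1,\dots,dA_q)=(d\bh)\cdot\cA$. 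One would then apply the hypothesis at $d\bh\succeq\bh_0$, which holds whenever $\bh\succeq\bh_0$.)

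The remaining step is translation. Suppose $\cA$ is a CAAG and set $A_i'=A_i+b_i$. For $\bh\in\Nzero^q$ put $\beta_{\bh}:=\sum_i h_ib_i$. Expanding the definitions gives
\[
\bh\cdot\cA'=\beta_{\bh}+\bh\cdot\cA,
\qquad
(r\bh)\cdot\cA'=r\beta_{\bh}+(r\bh)\cdot\cA .
\]
Fix $\bh\succeq\bh_0$ and the covering $(r\bh)\cdot\cA\subseteq X_{\bh}+\bh\cdot\cA$. Adding $r\beta_{\bh}$ to both sides yields
\[
(r\bh)\cdot\cA'\subseteq \bigl(X_{\bh}+(r-1)\beta_{\bh}\bigr)+\beta_{\bh}+\bh\cdot\cA
=X'_{\bh}+\bh\cdot\cA',
\]
where $X'_{\bh}:=X_{\bh}+(r-1)\beta_{\bh}$. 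Translation does not change cardinality, so $|X'_{\bh}|\le\ell$, and the threshold $\bh_0$ is unchanged.

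Composing the two steps proves the proposition. The only point needing care is that the translation vector $(r-1)\beta_{\bh}$ depends on $\bh$. This is allowed, because Definition~\ref{def:CAAG} permits $X_{\bh}$ to vary with $\bh$. No step is a real obstacle; the whole argument is bookkeeping with the sumset identities.
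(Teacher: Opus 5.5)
Your proof is correct and is essentially the paper's argument. The paper computes $\bh\cdot\cA'=d(\bh\cdot\cA)+\beta_{\bh}$ in one go and takes $X'_{\bh}=dX_{\bh}+(r-1)\beta_{\bh}$, which is exactly what your two steps compose to; your only change is to route the dilation step through Proposition~\ref{prop:homomorphic}. Your parenthetical handling of the $d$-fold-sumset reading (applying the hypothesis at $d\bh$) is also correct, but the paper does not need it because it treats $dX$ as a dilate.
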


\begin{proof}
Fix $\bh=(h_1,\dots,h_q)$. Then
\[
\bh\cdot \cA'
=\sum_{i=1}^q h_i(dA_i+b_i)
=d\sum_{i=1}^q h_iA_i+\sum_{i=1}^q h_i b_i
=d(\bh\cdot \cA)+\beta_{\bh},
\]
where
\[
\beta_{\bh}:=\sum_{i=1}^q h_i b_i.
\]
Similarly,
\[
(r\bh)\cdot \cA'=d((r\bh)\cdot \cA)+r\beta_{\bh}.
\]
Suppose
\[
(r\bh)\cdot \cA\subseteq X_{\bh}+\bh\cdot \cA.
\]
Then
\[
d((r\bh)\cdot \cA)+r\beta_{\bh}
\subseteq dX_{\bh}+d(\bh\cdot \cA)+r\beta_{\bh}.
\]
Rewrite the right-hand side as
\[
\bigl(dX_{\bh}+(r-1)\beta_{\bh}\bigr)+\bigl(d(\bh\cdot \cA)+\beta_{\bh}\bigr)
=
\bigl(dX_{\bh}+(r-1)\beta_{\bh}\bigr)+\bh\cdot \cA'.
\]
Hence
\[
(r\bh)\cdot \cA'
\subseteq X_{\bh}'+\bh\cdot \cA'
\]
with
\[
X_{\bh}':=dX_{\bh}+(r-1)\beta_{\bh}.
\]
Since translation and the map $x\mapsto dx$ do not increase cardinality,
\[
|X_{\bh}'|\le |X_{\bh}|\le \ell.
\]
Thus $\cA'$ is a CAAG with the same parameters.
\end{proof}

\begin{proof}[Proof of Theorem~\ref{thm:colorwise}]
For each $i$, choose a threshold $h_{0,i}\in \Nzero$ such that whenever $h_i\ge h_{0,i}$, there exists $X_{i,h_i}\subseteq G$ satisfying
\[
|X_{i,h_i}|\le \ell_i
\qquad\text{and}\qquad
r(h_iA_i)\subseteq X_{i,h_i}+h_iA_i.
\]
Set
\[
\bh_0:=(h_{0,1},\dots,h_{0,q}).
\]
Let $\bh=(h_1,\dots,h_q)\succeq \bh_0$. Then for each color $i$,
\[
r(h_iA_i)\subseteq X_{i,h_i}+h_iA_i.
\]
Summing these inclusions over $i$, we obtain
\[
\sum_{i=1}^q r(h_iA_i)
\subseteq
\sum_{i=1}^q (X_{i,h_i}+h_iA_i).
\]
Since finite sums distribute,
\[
\sum_{i=1}^q (X_{i,h_i}+h_iA_i)
=
\Bigl(\sum_{i=1}^q X_{i,h_i}\Bigr)+\Bigl(\sum_{i=1}^q h_iA_i\Bigr).
\]
By Lemma~\ref{lem:basicidentity},
\[
(r\bh)\cdot \cA
=\sum_{i=1}^q r(h_iA_i),
\]

\[
\bh\cdot \cA=\sum_{i=1}^q h_iA_i.
\]
Therefore
\[
(r\bh)\cdot \cA
\subseteq X_{\bh}+\bh\cdot \cA,
\]
where
\[
X_{\bh}:=\sum_{i=1}^q X_{i,h_i}.
\]
Finally,
\[
|X_{\bh}|
\le \prod_{i=1}^q |X_{i,h_i}|
\le \prod_{i=1}^q \ell_i.
\]
Hence $\cA$ is a chromatic asymptotic $(r,\prod_i \ell_i)$-approximate group.
\end{proof}

\subsection{Finite color classes}

In this subsection we show that finite tuples are chromatic approximate groups with explicit uniform bounds.

\subsubsection{A direct lattice-covering lemma for simplices}

For $d\in \N$ and $\rho\ge 0$, define
\[
\Delta_d(\rho):=\{(x_1,\dots,x_d)\in \R_{\ge 0}^d:x_1+\cdots+x_d\le \rho\}.
\]
For $t>0$, define
\[
\Delta_d(t)^\circ:=\{(x_1,\dots,x_d)\in \R_{\ge 0}^d:x_1+\cdots+x_d<t\}.
\]

\begin{lemma}\label{lem:latticecover}
Let $d,R,t\in \N$. Put
\[
M:=\left\lfloor \frac{dR}{t}\right\rfloor.
\]
Then
\[
\Delta_d(R)\cap \Z^d
\]
can be covered by at most
\[
\binom{M+d}{d}
\]
translates, by elements of $\Z^d$, of
\[
\Delta_d(t)^\circ\cap \Z^d.
\]
\end{lemma}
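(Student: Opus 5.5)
The plan is to build an explicit family of integer translation vectors indexed by the nonnegative integer vectors $\mathbf m=(m_1,\dots,m_d)\in\Nzero^d$ with $m_1+\cdots+m_d\le M$. There are exactly $\binom{M+d}{d}$ such vectors, by the standard stars-and-bars count of lattice points in the dilated simplex. So it suffices to attach to each such $\mathbf m$ a single translate $c(\mathbf m)+\bigl(\Delta_d(t)^\circ\cap\Z^d\bigr)$, with $c(\mathbf m)\in\Z^d$, and to show that every lattice point of $\Delta_d(R)$ lies in one of them.

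The first attempt, cutting $\Delta_d(R)$ into cubes of side $t$, fails: the residues would have sum up to $d(t-1)$ rather than less than $t$. The fix is to subdivide each coordinate at the finer scale $t/d$ and then round up to integers. Given $\mathbf m$, I will set
\[
c_j(\mathbf m):=\Bigl\lceil \frac{m_j t}{d}\Bigr\rceil\in\Z .
\]
Given $x\in\Delta_d(R)\cap\Z^d$, I will choose
\[
m_j:=\Bigl\lfloor \frac{d\,x_j}{t}\Bigr\rfloor\in\Nzero .
\]
Then $\sum_j m_j\le \sum_j d x_j/t\le dR/t$. Since $\sum_j m_j$ is an integer, $\sum_j m_j\le M$, so $\mathbf m$ is one of the admissible indices.

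It remains to check that $x-c(\mathbf m)\in\Delta_d(t)^\circ\cap\Z^d$.

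\emph{Nonnegativity.} From $m_j t/d\le x_j$ and the integrality of $x_j$, the ceiling satisfies $c_j(\mathbf m)\le x_j$.

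\emph{Strict sum bound.} We have $x_j-c_j(\mathbf m)\le x_j-m_jt/d<t/d$, because $d x_j/t<m_j+1$. Summing over $j$ gives $\sum_j(x_j-c_j(\mathbf m))<t$.

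\emph{Integrality.} The difference $x-c(\mathbf m)$ is integral since both vectors are.

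Hence $x\in c(\mathbf m)+\bigl(\Delta_d(t)^\circ\cap\Z^d\bigr)$, which proves the covering with at most $\binom{M+d}{d}$ translates.

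The only genuine obstacle is choosing the right discretization. The naive scale-$t$ grid loses a factor of $d$ in the residue sum. Rounding up the points of the scale-$t/d$ grid keeps each coordinate residue strictly below $t/d$, while preserving integrality of the translates and nonnegativity of the residues. After that, the count via $\lfloor dR/t\rfloor$ is routine.
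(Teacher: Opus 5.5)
Your proposal is correct and is essentially the paper's proof step for step. The paper also sets $m_i=\lfloor dx_i/t\rfloor$ with $m_1+\cdots+m_d\le M$, translates by $u_i=\lceil tm_i/d\rceil$, verifies $0\le x_i-u_i<t/d$ so that the residues sum to less than $t$, and counts the index vectors as $\binom{M+d}{d}$.
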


\begin{proof}
Fix
\[
x=(x_1,\dots,x_d)\in \Delta_d(R)\cap \Z^d.
\]
For each $i\in [1,d]$, define
\[
m_i:=\left\lfloor \frac{d x_i}{t}\right\rfloor\in \Nzero.
\]
Then
\[
m_i\le \frac{d x_i}{t},
\]
and therefore
\[
m_1+\cdots+m_d
\le
\frac{d}{t}(x_1+\cdots+x_d)
\le
\frac{dR}{t}.
\]
Since the left-hand side is an integer, it follows that
\[
m_1+\cdots+m_d\le M.
\]

Now define
\[
u_i:=\left\lceil \frac{t m_i}{d}\right\rceil\in \Nzero
\qquad (1\le i\le d),
\]
and set
\[
u:=(u_1,\dots,u_d)\in \Z^d.
\]
Because
\[
\frac{t m_i}{d}\le x_i
\]
and $x_i$ is an integer, we have
\[
u_i=\left\lceil \frac{t m_i}{d}\right\rceil \le x_i.
\]
Hence
\[
x_i-u_i\in \Nzero
\qquad\text{for all }i.
\]
Moreover, since $m_i=\lfloor d x_i/t\rfloor$, we have
\[
m_i>\frac{d x_i}{t}-1,
\]
and multiplying by $t/d$ gives
\[
\frac{t m_i}{d}>x_i-\frac{t}{d}.
\]
Therefore
\[
u_i\ge \frac{t m_i}{d}>x_i-\frac{t}{d},
\]
so
\[
0\le x_i-u_i<\frac{t}{d}.
\]
Summing over $i$, we obtain
\[
0\le \sum_{i=1}^d (x_i-u_i)< d\cdot \frac{t}{d}=t.
\]
Thus
\[
x-u\in \Delta_d(t)^\circ\cap \Z^d,
\]
and so
\[
x\in u+\bigl(\Delta_d(t)^\circ\cap \Z^d\bigr).
\]

We have shown that every $x\in \Delta_d(R)\cap \Z^d$ lies in a translate indexed by a vector
\[
m=(m_1,\dots,m_d)\in \Nzero^d
\qquad\text{with}\qquad
m_1+\cdots+m_d\le M.
\]
The number of such vectors is
\[
\sum_{s=0}^M \binom{s+d-1}{d-1}
=
\binom{M+d}{d}.
\]
Hence the stated covering number bound follows.
\end{proof}

\subsubsection{Covering theorem in a free abelian group}

Let $\Z^k$ have standard basis $e_1,\dots,e_k$, and let
\[
B:=\{e_1,\dots,e_k\}.
\]
Then
\[
hB=\{x\in \Z_{\ge 0}^k:x_1+\cdots+x_k=h\}.
\]

\begin{proposition}\label{prop:freefinite}
Let $k,r\in \N$. For every $h\in \Nzero$ there exists a set $Y_h\subseteq \Z^k$ with
\[
|Y_h|\le \binom{(r+1)(k-1)}{k-1}
\]
such that
\[
rhB\subseteq Y_h+hB.
\]
\end{proposition}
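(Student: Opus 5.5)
We plan to reduce Proposition~\ref{prop:freefinite} to Lemma~\ref{lem:latticecover} by projecting the slices $hB$ and $rhB$ onto their first $k-1$ coordinates. Put $d:=k-1$. If $k=1$, then $rhB=\{rh\,e_1\}$ and $hB=\{h\,e_1\}$, so $Y_h=\{(r-1)h\,e_1\}$ works, and the bound $\binom{0}{0}=1$ holds. If $h=0$, both sets equal $\{0\}$ and $Y_0=\{0\}$. So assume $k\ge 2$ and $h\ge 1$.

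Let $\pi:\Z^k\to\Z^d$ forget the last coordinate. On the hyperplane $\{x_1+\cdots+x_k=N\}$, $\pi$ is injective, and it maps $NB$ bijectively onto $\Delta_d(N)\cap\Z^d$. The condition $x_k=N-\sum_{i<k}x_i\ge 0$ is exactly membership in the closed simplex.

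Apply Lemma~\ref{lem:latticecover} with $R=rh$ and $t=h+1$. The choice $t=h+1$ matters, because the open simplex $\Delta_d(h+1)^\circ\cap\Z^d$ equals $\Delta_d(h)\cap\Z^d=\pi(hB)$. This gives a set $U\subseteq\Z^d$ of translates with
\[
\pi(rhB)=\Delta_d(rh)\cap\Z^d\subseteq U+\pi(hB),\qquad |U|\le\binom{M+d}{d},\quad M=\Bigl\lfloor\frac{d\,rh}{h+1}\Bigr\rfloor .
\]

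Next, lift the covering back to $\Z^k$. For $u\in U$, set $y_u:=(u,(r-1)h-\sum_i u_i)\in\Z^k$ and $Y_h:=\{y_u\}$. Take $x\in rhB$ and write $\pi(x)=u+\pi(z)$ with $z\in hB$. Then $y_u+z$ has coordinate sum $(r-1)h+h=rh$, and its first $d$ coordinates agree with those of $x$. Since the last coordinate is determined by the sum, $x=y_u+z$. Only the coordinate sum of $Y_h$ needs controlling; nonnegativity is not required. Note that the lemma's translates indexed by $m$ have $u_i\ge0$, which is harmless either way.

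It remains to check the count. Since $\frac{rh}{h+1}<r$, we get $M\le dr$, and hence
\[
|Y_h|\le\binom{dr+d}{d}=\binom{(r+1)(k-1)}{k-1}.
\]
We expect no step to be a real obstacle. The one point needing care is the identification of the open simplex of radius $h+1$ with the closed simplex of radius $h$ on lattice points, which is exactly what yields the binomial constant $\lambda_r(k)$.
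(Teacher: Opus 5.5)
Your proof is correct and is essentially the paper's argument. You project onto the first $k-1$ coordinates, apply Lemma~\ref{lem:latticecover} with $R=rh$, and lift each translate $u$ to $(u,(r-1)h-\sum_i u_i)$, which is exactly the paper's $f(v_j)-(0,\dots,0,h)$.

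The only difference is that you take $t=h+1$ where the paper takes $t=h$. With your choice the open simplex is all of $\pi(hB)$, not just the part of $hB$ with positive last coordinate. Moreover, $drh/(h+1)<dr$ strictly and $dr$ is an integer, so you actually get $M\le dr-1$. For $k\ge 2$ this gives the slightly sharper bound $\binom{(r+1)(k-1)-1}{k-1}$, which you did not use.
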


\begin{proof}
If $h=0$, then
\[
rhB=hB=\{0\},
\]
so the claim is immediate.

If $k=1$, then
\[
B=\{e_1\},\qquad hB=\{he_1\},\qquad rhB=\{rhe_1\},
\]
and hence
\[
rhB\subseteq \{(r-1)he_1\}+hB.
\]
Since
\[
\binom{(r+1)(1-1)}{1-1}=\binom00=1,
\]
the required bound holds.

Assume now that $k\ge 2$ and $h\ge 1$, and write
\[
d:=k-1.
\]
Apply Lemma~\ref{lem:latticecover} with
\[
R=rh,
\qquad
t=h.
\]
Then
\[
M=\left\lfloor \frac{d(rh)}{h}\right\rfloor=dr,
\]
so there exist vectors
\[
v_1,\dots,v_N\in \Z^d,
\qquad
N\le \binom{dr+d}{d}=\binom{(r+1)(k-1)}{k-1},
\]
such that
\[
\Delta_d(rh)\cap \Z^d
\subseteq
\bigcup_{j=1}^N \left(v_j+\bigl(\Delta_d(h)^\circ\cap \Z^d\bigr)\right).
\]

Consider the affine map
\[
f:\Z^d\to \Z^k,
\qquad
f(y_1,\dots,y_d):=(y_1,\dots,y_d,\,rh-(y_1+\cdots+y_d)).
\]
Its image on
\[
\Delta_d(rh)\cap \Z^d
\]
is exactly $rhB$.

Fix $j\in [1,N]$. For
\[
z=(z_1,\dots,z_d)\in \Delta_d(h)^\circ\cap \Z^d,
\]
define
\[
g(z):=(z_1,\dots,z_d,\,h-(z_1+\cdots+z_d)).
\]
Since
\[
z_1+\cdots+z_d<h
\]
and the left-hand side is an integer, we have
\[
h-(z_1+\cdots+z_d)\in \N,
\]
so $g(z)\in hB$.

Now compute
\[
f(v_j+z)
=
(v_{j,1}+z_1,\dots,v_{j,d}+z_d,\,
rh-\sum_{m=1}^d v_{j,m}-\sum_{m=1}^d z_m),
\]
while
\[
\bigl(f(v_j)-(0,\dots,0,h)\bigr)+g(z)
=
(v_{j,1}+z_1,\dots,v_{j,d}+z_d,\,
rh-\sum_{m=1}^d v_{j,m}-h+h-\sum_{m=1}^d z_m).
\]
Hence
\[
f(v_j+z)=\bigl(f(v_j)-(0,\dots,0,h)\bigr)+g(z).
\]
Therefore
\[
f\!\left(v_j+\bigl(\Delta_d(h)^\circ\cap \Z^d\bigr)\right)
\subseteq
\bigl(f(v_j)-(0,\dots,0,h)\bigr)+hB.
\]

Taking the union over $j=1,\dots,N$, we obtain
\[
rhB\subseteq Y_h+hB,
\]
where
\[
Y_h:=\{\,f(v_j)-(0,\dots,0,h):1\le j\le N\,\}.
\]
Thus
\[
|Y_h|\le N\le \binom{(r+1)(k-1)}{k-1}.
\]
This proves the proposition.
\end{proof}

\subsubsection{Finite subsets of an arbitrary abelian group}

\begin{proposition}\label{prop:finiteabelian}
Let $A\subseteq G$ be finite, with $|A|=k$. Then for every $r\in \N$ and every $h\in \Nzero$, there exists a set $X_h\subseteq G$ with
\[
|X_h|\le \binom{(r+1)(k-1)}{k-1}
\]
such that
\[
r(hA)\subseteq X_h+hA.
\]
\end{proposition}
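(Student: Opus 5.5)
The plan is to transfer Proposition~\ref{prop:freefinite} from the free abelian group $\Z^k$ to $G$ via a homomorphism, in the same way Proposition~\ref{prop:homomorphic} transfers chromatic coverings. Write $A=\{a_1,\dots,a_k\}$ with the $a_j$ distinct. If $A$ were empty the statement would be vacuous or degenerate, so assume $k\ge 1$. Define the homomorphism
\[
\pi:\Z^k\to G,\qquad \pi(x_1,\dots,x_k):=x_1a_1+\cdots+x_ka_k,
\]
so that $\pi(e_j)=a_j$ and $\pi(B)=A$, where $B=\{e_1,\dots,e_k\}$ as before.

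The first step is the identity $\pi(nB)=nA$ for every $n\in\Nzero$. This holds because homomorphisms commute with finite sumsets: an element of $nA$ is $a_{j_1}+\cdots+a_{j_n}=\pi(e_{j_1}+\cdots+e_{j_n})$, and conversely. In particular $\pi(hB)=hA$ and $\pi(rhB)=rhA=r(hA)$; the last equality is the case $q=1$ of Lemma~\ref{lem:basicidentity}, or simply associativity of sumsets.

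The second step fixes $h\in\Nzero$ and takes $Y_h\subseteq\Z^k$ from Proposition~\ref{prop:freefinite}, so that $|Y_h|\le\binom{(r+1)(k-1)}{k-1}$ and $rhB\subseteq Y_h+hB$. Applying $\pi$ gives
\[
r(hA)=\pi(rhB)\subseteq \pi(Y_h)+\pi(hB)=\pi(Y_h)+hA.
\]
Setting $X_h:=\pi(Y_h)$, we have $|X_h|\le|Y_h|$, and this gives the stated bound.

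No step here is a serious obstacle, since all the geometric work sits in Lemma~\ref{lem:latticecover} and Proposition~\ref{prop:freefinite}. The only point needing care is that $\pi$ is merely surjective onto $hA$ and not injective. This is harmless, because images of inclusions remain inclusions and images of sets can only shrink. The case $h=0$, where both sides equal $\{0\}$, is already covered by Proposition~\ref{prop:freefinite}.
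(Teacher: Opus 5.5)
Your proposal is correct and follows the paper's proof essentially verbatim. You push the covering $rhB\subseteq Y_h+hB$ from Proposition~\ref{prop:freefinite} forward along the homomorphism $\pi:\Z^k\to G$ with $\pi(e_j)=a_j$, and use $|\pi(Y_h)|\le|Y_h|$; your explicit assumption $k\ge 1$, which the paper leaves implicit, is a harmless addition.
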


\begin{proof}
Enumerate the elements of $A$ as
\[
A=\{a_1,\dots,a_k\}.
\]
Let $\pi:\Z^k\to G$ be the homomorphism defined by
\[
\pi(e_j)=a_j
\qquad (1\le j\le k).
\]
Then
\[
\pi(B)=A,
\qquad
\pi(hB)=hA,
\qquad
\pi(rhB)=r(hA).
\]
By Proposition~\ref{prop:freefinite}, there exists $Y_h\subseteq \Z^k$ such that
\[
|Y_h|\le \binom{(r+1)(k-1)}{k-1}
\qquad\text{and}\qquad
rhB\subseteq Y_h+hB.
\]
Applying $\pi$, we obtain
\[
r(hA)=\pi(rhB)\subseteq \pi(Y_h)+\pi(hB)=X_h+hA,
\]
where
\[
X_h:=\pi(Y_h).
\]
Since $|X_h|\le |Y_h|$, the result follows.
\end{proof}

\begin{proof}[Proof of Theorem~\ref{thm:finitecolor}]
Fix $\bh=(h_1,\dots,h_q)$. For each color $i$, Proposition~\ref{prop:finiteabelian} gives a set $X_{i,h_i}\subseteq G$ with
\[
|X_{i,h_i}|
\le
\binom{(r+1)(k_i-1)}{k_i-1}
\qquad\text{and}\qquad
r(h_iA_i)\subseteq X_{i,h_i}+h_iA_i.
\]
Summing these inclusions over all $i$,
\[
\sum_{i=1}^q r(h_iA_i)
\subseteq
\sum_{i=1}^q (X_{i,h_i}+h_iA_i)
=
\Bigl(\sum_{i=1}^q X_{i,h_i}\Bigr)+\Bigl(\sum_{i=1}^q h_iA_i\Bigr).
\]
By Lemma~\ref{lem:basicidentity}, the left-hand side equals $(r\bh)\cdot \cA$, while the second sum on the right is $\bh\cdot \cA$. Hence
\[
(r\bh)\cdot \cA\subseteq X_{\bh}+\bh\cdot \cA,
\]
where
\[
X_{\bh}:=\sum_{i=1}^q X_{i,h_i}.
\]
The cardinality satisfies
\[
|X_{\bh}|
\le
\prod_{i=1}^q |X_{i,h_i}|
\le
\prod_{i=1}^q \binom{(r+1)(k_i-1)}{k_i-1}.
\]
This proves the theorem.
\end{proof}

\begin{remark}
This is stronger than asymptoticity: no threshold is needed. The same uniform bound works for every chromatic parameter $\bh$.
\end{remark}

\subsection{Linear, semilinear, and inhomogeneous chromatic families}

\begin{proof}[Proof of Theorem~\ref{thm:submonoid}]
Fix $\bh$.

If $\bh=\mathbf{0}$, then both sides are equal to $\{0\}$, so the claim follows immediately.
Assume from now on that $\bh\neq \mathbf{0}$. For each color $i$, we distinguish the cases $h_i=0$ and $h_i\ge 1$.

If $h_i=0$, then by convention
\[
h_iA_i=0A_i=\{0\}.
\]
If $h_i\ge 1$, then
\[
h_iA_i=h_i(a_i+M_i)=h_i a_i+h_iM_i.
\]
Because $M_i$ is a submonoid and contains $0$, we have $h_iM_i=M_i$ for every $h_i\ge 1$: inclusion $M_i\subseteq h_iM_i$ follows by inserting zero terms, while $h_iM_i\subseteq M_i$ follows from closure under addition. Thus
\[
h_iA_i=
\begin{cases}
\{0\}, & h_i=0,\\
h_i a_i+M_i, & h_i\ge 1.
\end{cases}
\]
Define
\[
I(\bh):=\{i:h_i\ge 1\}.
\]
Then
\[
\bh\cdot \cA
=\sum_{i\in I(\bh)}(h_i a_i+M_i)
=\alpha_{\bh}+M_{\bh},
\]
where
\[
\alpha_{\bh}:=\sum_{i=1}^q h_i a_i,
\qquad
M_{\bh}:=\sum_{i\in I(\bh)} M_i.
\]
The same reasoning gives
\[
(r\bh)\cdot \cA=r\alpha_{\bh}+M_{\bh}.
\]
Hence
\[
(r\bh)\cdot \cA
=r\alpha_{\bh}+M_{\bh}
=(r-1)\alpha_{\bh}+(\alpha_{\bh}+M_{\bh})
=(r-1)\sum_{i=1}^q h_i a_i+\bh\cdot \cA.
\]
Therefore the covering holds with the singleton
\[
X_{\bh}:=\left\{(r-1)\sum_{i=1}^q h_i a_i\right\}.
\]
So $|X_{\bh}|=1$.
\end{proof}

\begin{definition}
Let $K\in \N$. A subset $M$ of an abelian group $G$ is called a \emph{$K$-approximate submonoid} if
\[
0\in M
\]
and there exists a finite set $F\subseteq G$ with
\[
|F|\le K
\qquad\text{and}\qquad
M+M\subseteq F+M.
\]
\end{definition}

The exact one-translate identity of Theorem~\ref{thm:submonoid} need not persist for approximate submonoids. However, one still obtains a uniform finite covering theorem once the approximate-submonoid parameters are fixed.

\begin{proposition}[Approximate submonoid variant]\label{prop:approxsubmonoid}
Let $r\in \N$, let $G$ be an abelian group, and let
\[
\cA=(A_1,\dots,A_q).
\]
Suppose that for each color $i$,
\[
A_i=a_i+M_i,
\]
where $a_i\in G$ and $M_i\subseteq G$ is a $K_i$-approximate submonoid for some $K_i\in \N$.
Then for every $\bh=(h_1,\dots,h_q)\in \Nzero^q$, there exists a finite set $X_{\bh}\subseteq G$ such that
\[
(r\bh)\cdot \cA\subseteq X_{\bh}+\bh\cdot \cA
\]
and
\[
|X_{\bh}|
\le
\prod_{i:\,h_i>0}
\binom{(r-1)h_i+K_i-1}{K_i-1}.
\]
Equivalently, if $K:=\max_i K_i$, then
\[
|X_{\bh}|
\le
\prod_{i:\,h_i>0}
\binom{(r-1)h_i+K-1}{K-1}.
\]
\end{proposition}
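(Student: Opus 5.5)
The plan is to reduce to a single color, as in the proofs of Theorems~\ref{thm:finitecolor} and~\ref{thm:colorwise}, and then do a direct one-color computation for approximate submonoids. Colors with $h_i=0$ contribute $\{0\}$ to both $(r\bh)\cdot\cA$ and $\bh\cdot\cA$, so they only contribute the translate $\{0\}$ to the cover. Once a one-color cover $r(h_iA_i)\subseteq X_{i}+h_iA_i$ is available for each $i$ with $h_i\ge 1$, we set $X_{\bh}:=\sum_{i:h_i>0}X_i$. Summing the inclusions and applying Lemma~\ref{lem:basicidentity} gives the covering, with $|X_{\bh}|\le\prod_{i:h_i>0}|X_i|$. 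The second (``equivalently'') bound is then just monotonicity of $\binom{n+K-1}{K-1}$ in $K$.

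For a single color with $h=h_i\ge 1$, write $A=a+M$, where $M+M\subseteq F+M$ and $|F|\le K$. Then $r(hA)=rha+(rh)M$ and $hA=ha+hM$. The first key step is to iterate the approximate-submonoid relation. By induction on $n\ge 1$ I will show
\[
nM\subseteq (n-1)F+M .
\]
The case $n=1$ is trivial. For the inductive step,
\[
(n+1)M=nM+M\subseteq (n-1)F+M+M\subseteq (n-1)F+F+M=nF+M .
\]

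The second step is to compare $(rh)M$ with $hM$ rather than with $M$. Since $0\in M$, we have $M\subseteq hM$. Grouping,
\[
(rh)M=r(hM)\subseteq (rh-1)F+M\subseteq (rh-1)F+hM .
\]
This gives a cover by the $(rh-1)$-fold sumset of $F$, of size at most $\binom{rh-1+K-1}{K-1}$. That is weaker than the target exponent $(r-1)h$, so the bookkeeping has to be sharpened.

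The sharper route is to keep $h$ copies of $M$ untouched and absorb only the excess $(r-1)h$ copies. Write
\[
(rh)M=(r-1)hM+hM .
\]
We need $(r-1)hM+hM\subseteq (r-1)hF+hM$. It suffices that each single absorption $M+hM\subseteq F+hM$ holds, applied $(r-1)h$ times. This follows from
\[
M+hM=(M+M)+(h-1)M\subseteq F+M+(h-1)M=F+hM,
\]
using $h\ge 1$. Iterating then gives
\[
(rh)M\subseteq (r-1)h\,F+hM .
\]
Consequently
\[
r(hA)\subseteq \bigl((r-1)ha+(r-1)hF\bigr)+hA .
\]

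The final step is counting. The set $(r-1)hF$ has at most as many elements as there are multisets of size $(r-1)h$ from $K$ elements, namely $\binom{(r-1)h+K-1}{K-1}$. Translation preserves this count. Taking $X_i:=(r-1)h_ia_i+(r-1)h_iF_i$ completes the one-color step.

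The main obstacle is the absorption inequality $M+hM\subseteq F+hM$: one must make sure it uses only $M+M\subseteq F+M$ and $h\ge 1$, and does not need $0\in F$ or any symmetry. The computation above indicates it does. Beyond that, the only care needed is the degenerate case $r=1$, where the bound is $1$ and $X=\{0\}$ works.
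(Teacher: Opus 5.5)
Your proposal is correct and matches the paper's proof: the colorwise reduction via Lemma~\ref{lem:basicidentity}, the absorption step $M_i+h_iM_i\subseteq F_i+h_iM_i$ iterated $(r-1)h_i$ times to obtain $rh_iM_i\subseteq (r-1)h_iF_i+h_iM_i$, the cover $X_{i,h_i}=(r-1)h_ia_i+(r-1)h_iF_i$, and the weak-composition count. The preliminary detour through $nM\subseteq (n-1)F+M$ gives a weaker bound and can simply be dropped.
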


\begin{proof}
For each color $i$, choose a finite set $F_i\subseteq G$ such that
\[
|F_i|\le K_i
\qquad\text{and}\qquad
M_i+M_i\subseteq F_i+M_i.
\]

Fix $\bh=(h_1,\dots,h_q)\in \Nzero^q$. We first construct, for each color $i$, a finite set $X_{i,h_i}$ satisfying
\[
r(h_iA_i)\subseteq X_{i,h_i}+h_iA_i.
\]

If $h_i=0$, then
\[
h_iA_i=0A_i=\{0\},
\qquad
r(h_iA_i)=\{0\},
\]
so we may take
\[
X_{i,0}:=\{0\}.
\]

Assume now that $h_i\ge 1$. We claim that for every integer $s\in \Nzero$,
\[
sM_i+h_iM_i\subseteq sF_i+h_iM_i.
\]
We prove this by induction on $s$.

For $s=0$, the statement is immediate. Suppose it holds for some $s\ge 0$. Since $h_i\ge 1$, we have
\[
M_i+h_iM_i=(M_i+M_i)+(h_i-1)M_i\subseteq F_i+M_i+(h_i-1)M_i=F_i+h_iM_i.
\]
Therefore
\[
(s+1)M_i+h_iM_i
=
sM_i+(M_i+h_iM_i)
\subseteq
sM_i+F_i+h_iM_i
\subseteq
sF_i+F_i+h_iM_i
=
(s+1)F_i+h_iM_i.
\]
This completes the induction.

Applying the claim with
\[
s=(r-1)h_i,
\]
we obtain
\[
rh_iM_i=(r-1)h_iM_i+h_iM_i\subseteq (r-1)h_iF_i+h_iM_i.
\]
Hence
\[
r(h_iA_i)
=
r(h_i a_i+h_iM_i)
=
rh_i a_i+rh_iM_i
\subseteq
\bigl((r-1)h_i a_i+(r-1)h_iF_i\bigr)+\bigl(h_i a_i+h_iM_i\bigr).
\]
Since
\[
h_iA_i=h_i a_i+h_iM_i,
\]
this yields
\[
r(h_iA_i)\subseteq X_{i,h_i}+h_iA_i,
\]
where
\[
X_{i,h_i}:=
(r-1)h_i a_i+(r-1)h_iF_i.
\]

We now bound the cardinality of $X_{i,h_i}$. Translation does not change cardinality, so
\[
|X_{i,h_i}|=|(r-1)h_iF_i|.
\]
Write
\[
F_i=\{f_{i,1},\dots,f_{i,m_i}\},
\qquad m_i:=|F_i|\le K_i.
\]
Every element of $(r-1)h_iF_i$ can be written in the form
\[
n_1f_{i,1}+\cdots+n_{m_i}f_{i,m_i},
\qquad
n_1,\dots,n_{m_i}\in \Nzero,
\qquad
n_1+\cdots+n_{m_i}=(r-1)h_i.
\]
The number of such weak compositions is
\[
\binom{(r-1)h_i+m_i-1}{m_i-1},
\]
and therefore
\[
|X_{i,h_i}|
\le
\binom{(r-1)h_i+m_i-1}{m_i-1}
\le
\binom{(r-1)h_i+K_i-1}{K_i-1}.
\]

Thus, in all cases,
\[
r(h_iA_i)\subseteq X_{i,h_i}+h_iA_i
\]
with
\[
|X_{i,h_i}|
\le
\binom{(r-1)h_i+K_i-1}{K_i-1}
\qquad (h_i\ge 1),
\]
and $|X_{i,0}|=1$.

Summing the colorwise inclusions over $i$, we obtain
\[
\sum_{i=1}^q r(h_iA_i)
\subseteq
\sum_{i=1}^q (X_{i,h_i}+h_iA_i)
=
\Bigl(\sum_{i=1}^q X_{i,h_i}\Bigr)+\Bigl(\sum_{i=1}^q h_iA_i\Bigr).
\]
By Lemma~\ref{lem:basicidentity}, the left-hand side is $(r\bh)\cdot \cA$, while the second sum on the right is $\bh\cdot \cA$. Hence
\[
(r\bh)\cdot \cA\subseteq X_{\bh}+\bh\cdot \cA,
\qquad
X_{\bh}:=\sum_{i=1}^q X_{i,h_i}.
\]
Finally,
\[
|X_{\bh}|
\le
\prod_{i=1}^q |X_{i,h_i}|
\le
\prod_{i:\,h_i>0}
\binom{(r-1)h_i+K_i-1}{K_i-1}.
\]
This proves the proposition.
\end{proof}

\begin{remark}
Unlike Theorem~\ref{thm:submonoid}, the covering bound in Proposition~\ref{prop:approxsubmonoid} generally depends on $\bh$. Thus the conclusion is not, in general, a chromatic $(r,\ell)$-approximate-group statement with $\ell$ independent of $\bh$. The proposition should instead be viewed as a polynomial covering theorem whose constants are uniform in the choice of the sets $A_i$ once the parameters $K_i$ are fixed.
\end{remark}

\begin{proof}[Proof of Theorem~\ref{thm:finiteplusmonoid}]
Fix $\bh=(h_1,\dots,h_q)$. For each color $i$, we define a finite set $X_{i,h_i}$.

If $h_i=0$, then
\[
h_iA_i=0A_i=\{0\},
\qquad
r(h_iA_i)=\{0\},
\]
so the covering holds with
\[
X_{i,0}:=\{0\}.
\]

Assume now that $h_i\ge 1$. Since $M_i$ is a submonoid and contains $0$, we have
\[
h_iM_i=M_i.
\]
Therefore
\[
h_iA_i=h_i(F_i+M_i)=h_iF_i+h_iM_i=h_iF_i+M_i.
\]
By Proposition~\ref{prop:finiteabelian}, there exists a finite set $Y_{i,h_i}\subseteq G$ such that
\[
|Y_{i,h_i}|
\le
\binom{(r+1)(k_i-1)}{k_i-1}
\qquad\text{and}\qquad
r(h_iF_i)\subseteq Y_{i,h_i}+h_iF_i.
\]
Hence
\[
r(h_iA_i)
=
r(h_iF_i+M_i)
=
r(h_iF_i)+rM_i
=
r(h_iF_i)+M_i
\subseteq
Y_{i,h_i}+h_iF_i+M_i
=
Y_{i,h_i}+h_iA_i.
\]
Set
\[
X_{i,h_i}:=Y_{i,h_i}.
\]
Then in all cases
\[
|X_{i,h_i}|
\le
\binom{(r+1)(k_i-1)}{k_i-1}
\qquad\text{and}\qquad
r(h_iA_i)\subseteq X_{i,h_i}+h_iA_i.
\]

Summing over all colors gives
\[
\sum_{i=1}^q r(h_iA_i)
\subseteq
\sum_{i=1}^q (X_{i,h_i}+h_iA_i)
=
\Bigl(\sum_{i=1}^q X_{i,h_i}\Bigr)+\Bigl(\sum_{i=1}^q h_iA_i\Bigr).
\]
By Lemma~\ref{lem:basicidentity}, the left-hand side is $(r\bh)\cdot \cA$ and the second sum on the right is $\bh\cdot \cA$. Therefore
\[
(r\bh)\cdot \cA\subseteq X_{\bh}+\bh\cdot \cA,
\qquad
X_{\bh}:=\sum_{i=1}^q X_{i,h_i}.
\]
Finally,
\[
|X_{\bh}|
\le
\prod_{i=1}^q |X_{i,h_i}|
\le
\prod_{i=1}^q \binom{(r+1)(k_i-1)}{k_i-1}.
\]
This proves the theorem.
\end{proof}

\begin{corollary}\label{cor:commonmonoid}
Let $G$ be an abelian group. Suppose that for each color $i$ there exists a finite set
\[
F_i=\{f_{i,1},\dots,f_{i,k_i}\}\subseteq G
\]
and a submonoid $M_i\subseteq G$ such that
\[
A_i=\bigcup_{j=1}^{k_i}(f_{i,j}+M_i)=F_i+M_i.
\]
Then for every $r\in \N$, the tuple $\cA=(A_1,\dots,A_q)$ is a chromatic
\[
\left(r,\prod_{i=1}^q \binom{(r+1)(k_i-1)}{k_i-1}\right)\text{-approximate group}.
\]
\end{corollary}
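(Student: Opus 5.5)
This follows directly from Theorem~\ref{thm:finiteplusmonoid}, so the plan is to check that its hypotheses hold. For each color $i$ we are given a finite set $F_i=\{f_{i,1},\dots,f_{i,k_i}\}$ and a submonoid $M_i$. The first step is to confirm the set identity
\[
\bigcup_{j=1}^{k_i}(f_{i,j}+M_i)=F_i+M_i.
\]
This holds by the definition of the Minkowski sum: an element $f+m$ with $f\in F_i$ and $m\in M_i$ has $f=f_{i,j}$ for some $j$, and conversely. Hence $A_i=F_i+M_i$ with $F_i$ finite and nonempty, so $A_i$ is exactly of the form required in Theorem~\ref{thm:finiteplusmonoid}.

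The only bookkeeping point concerns cardinality. The enumeration $f_{i,1},\dots,f_{i,k_i}$ might contain repetitions, in which case $|F_i|=k_i'\le k_i$. Theorem~\ref{thm:finiteplusmonoid} then gives, for every $\bh\in\Nzero^q$, a set $X_{\bh}$ with
\[
(r\bh)\cdot\cA\subseteq X_{\bh}+\bh\cdot\cA
\qquad\text{and}\qquad
|X_{\bh}|\le\prod_i\lambda_r(k_i').
\]
To pass to the stated constant, note that $\lambda_r(k)=\binom{(r+1)(k-1)}{k-1}$ is nondecreasing in $k$. Indeed,
\[
\binom{(r+1)(k-1)}{k-1}
=\binom{(r+1)(k-1)}{r(k-1)}
\le\binom{(r+1)k}{rk},
\]
because $\binom{n}{m}\le\binom{n+s}{m+s'}$ whenever $0\le s'\le s$; here the top entry increases by $r+1$ and the bottom by $r$. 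Therefore $\prod_i\lambda_r(k_i')\le\prod_i\lambda_r(k_i)$.

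The bound holds for every $\bh$, not only beyond a threshold, so $\cA$ is a chromatic approximate group in the sense of Definition~\ref{def:CAG} and not merely a CAAG. The argument has no genuine obstacle; the monotonicity of $\lambda_r$ is the only step that needs any checking, and if the $f_{i,j}$ are assumed distinct that step is unnecessary.
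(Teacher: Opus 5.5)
Your proposal is correct and takes the same route as the paper, which simply declares the corollary immediate from Theorem~\ref{thm:finiteplusmonoid}. Your extra check that $\lambda_r$ is nondecreasing in $k$ is valid: apply Pascal's rule stepwise, $r$ times with top and bottom both raised by $1$ and once with only the top raised. This ensures that repetitions in the enumeration $f_{i,1},\dots,f_{i,k_i}$ cannot hurt the bound, a point the paper leaves implicit by tacitly taking $|F_i|=k_i$.
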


\begin{proof}
This is immediate from Theorem~\ref{thm:finiteplusmonoid}.
\end{proof}

We next prove a one-color covering theorem for unions of unbounded linear sets.

\begin{proposition}[Covering theorem for positive composition shells]\label{prop:positiveshelltuple}
Let $G$ be an abelian group, and let $f_1,\dots,f_k\in G$.
For $h\in \Nzero$, define
\[
\Sigma_h(f_1,\dots,f_k)
:=
\left\{
n_1f_1+\cdots+n_kf_k:
n_1,\dots,n_k\in \Nzero,\ 
n_1+\cdots+n_k=h
\right\}.
\]
If $h\ge k$, define also
\[
\Sigma_h^+(f_1,\dots,f_k)
:=
\left\{
n_1f_1+\cdots+n_kf_k:
n_1,\dots,n_k\in \N,\ 
n_1+\cdots+n_k=h
\right\}.
\]

If $k=1$, then for every $h\in \N$,
\[
\Sigma_{rh}(f_1)=\{(r-1)hf_1\}+\Sigma_h^+(f_1).
\]

Assume now that $k\ge 2$ and
\[
h\ge r(k-1)^2+k.
\]
Then there exists a set $X_h\subseteq G$ such that
\[
|X_h|\le \binom{(r+1)(k-1)}{k-1}
\]
and
\[
\Sigma_{rh}(f_1,\dots,f_k)
\subseteq
X_h+\Sigma_h^+(f_1,\dots,f_k).
\]
\end{proposition}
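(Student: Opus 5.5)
The plan is to reduce to the free abelian group, exactly as Proposition~\ref{prop:finiteabelian} is deduced from Proposition~\ref{prop:freefinite}, and to rerun the simplex-covering argument of Proposition~\ref{prop:freefinite} with a slightly smaller simplex. The smaller simplex leaves room to force every coordinate to be at least $1$.

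\textbf{The case $k=1$.} This is immediate. $\Sigma_{rh}(f_1)=\{rhf_1\}$ and $\Sigma_h^+(f_1)=\{hf_1\}$, so the translate $(r-1)hf_1$ works.

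\textbf{Reduction for $k\ge 2$.} Let $\pi:\Z^k\to G$ be given by $e_j\mapsto f_j$. Then $\Sigma_h(f_1,\dots,f_k)=\pi(hB)$. Also $\Sigma_h^+(f_1,\dots,f_k)=\pi(hB^+)$, where
\[
hB^+:=\{x\in \N^k:x_1+\cdots+x_k=h\}.
\]
Since $|\pi(Y)|\le|Y|$, it suffices to find $Y_h\subseteq\Z^k$ with $rhB\subseteq Y_h+hB^+$ and $|Y_h|\le\binom{(r+1)(k-1)}{k-1}$.

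\textbf{Covering in $\Z^k$.} Put $d=k-1$. Apply Lemma~\ref{lem:latticecover} with $R=rh$ and $t=h-d$, which is positive under the hypothesis. The key numerical check is that the hypothesis $h\ge rd^2+d+1=r(k-1)^2+k$ gives $M=\lfloor drh/(h-d)\rfloor=dr$. Indeed, $drh<(dr+1)(h-d)$ rearranges to $h>rd^2+d$, and $drh/(h-d)\ge dr$ trivially. So we get $N\le\binom{dr+d}{d}=\binom{(r+1)(k-1)}{k-1}$ vectors $v_1,\dots,v_N\in\Z^d$ such that $\Delta_d(rh)\cap\Z^d$ is covered by the sets $v_j+(\Delta_d(h-d)^\circ\cap\Z^d)$.

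\textbf{Positive-composition map.} With $\mathbf 1=(1,\dots,1)\in\Z^d$, define for $z\in\Delta_d(h-d)^\circ\cap\Z^d$
\[
g^+(z):=\bigl(z+\mathbf 1,\ h-d-(z_1+\cdots+z_d)\bigr).
\]
Its first $d$ coordinates are $\ge1$. Its last coordinate is an integer $>0$ because $\sum z_m<h-d$. Its coordinates sum to $h$, so $g^+(z)\in hB^+$.

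\textbf{Translate identity.} With $f(y)=(y,\ rh-\sum y_m)$ as in Proposition~\ref{prop:freefinite}, a direct computation gives
\[
f(v_j+z)=\bigl(v_j-\mathbf 1,\ rh-h+d-\textstyle\sum_m v_{j,m}\bigr)+g^+(z).
\]
The first summand depends only on $j$. Call these vectors $w_j$ and set $Y_h=\{w_1,\dots,w_N\}$. Since $f$ maps $\Delta_d(rh)\cap\Z^d$ onto $rhB$, this yields $rhB\subseteq Y_h+hB^+$, and pushing forward by $\pi$ finishes the proof.

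\textbf{Main obstacle.} The only delicate point is choosing $t=h-(k-1)$. It must be small enough that the shift by $\mathbf 1$ keeps the last coordinate positive. It must also be large enough that $M$ does not exceed $dr$, so that the binomial bound is unchanged. Verifying that $h\ge r(k-1)^2+k$ is exactly the threshold making $\lfloor drh/(h-d)\rfloor=dr$ is the step I would check most carefully.
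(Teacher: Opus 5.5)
Your proposal is correct and follows the paper's proof essentially step for step. You make the same choice $t=h-(k-1)$ in Lemma~\ref{lem:latticecover}, use the same shifted map $z\mapsto(z+\mathbf 1,\,h-d-\sum z_m)$ into $hB^+$, take the same translates $f(v_j)-(1,\dots,1,h-d)$, and then push forward along $\pi$; your check that $\lfloor drh/(h-d)\rfloor=dr$ precisely when $h\ge r(k-1)^2+k$ matches the paper's inequality $drh<(dr+1)(h-d)$.
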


\begin{proof}
If $k=1$, the displayed identity is immediate.

Assume that $k\ge 2$, and write
\[
d:=k-1.
\]
Let
\[
B:=\{e_1,\dots,e_k\}\subseteq \Z^k,
\]
and define
\[
hB^+:=\{x\in \Z_{\ge 1}^k:x_1+\cdots+x_k=h\}.
\]
Apply Lemma~\ref{lem:latticecover} with
\[
R=rh,
\qquad
t=h-d.
\]
Since
\[
h\ge rd^2+d+1,
\]
we have
\[
t=h-d\ge rd^2+1\ge 1.
\]
Moreover,
\[
\frac{dR}{t}
=
\frac{drh}{h-d}
<
dr+1,
\]
because
\[
drh<(dr+1)(h-d)
\iff
0<h-d(dr+1).
\]
Hence
\[
\left\lfloor \frac{dR}{t}\right\rfloor \le dr.
\]
By Lemma~\ref{lem:latticecover}, there exist vectors
\[
v_1,\dots,v_N\in \Z^d,
\qquad
N\le \binom{dr+d}{d}=\binom{(r+1)(k-1)}{k-1},
\]
such that
\[
\Delta_d(rh)\cap \Z^d
\subseteq
\bigcup_{j=1}^N
\left(
v_j+\bigl(\Delta_d(h-d)^\circ\cap \Z^d\bigr)
\right).
\]

Consider the affine map
\[
f:\Z^d\to \Z^k,
\qquad
f(y_1,\dots,y_d):=(y_1,\dots,y_d,\,rh-(y_1+\cdots+y_d)).
\]
Its image on
\[
\Delta_d(rh)\cap \Z^d
\]
is exactly $rhB$.

For
\[
z=(z_1,\dots,z_d)\in \Delta_d(h-d)^\circ\cap \Z^d,
\]
define
\[
g(z):=(z_1+1,\dots,z_d+1,\,h-d-(z_1+\cdots+z_d)).
\]
Since
\[
z_1+\cdots+z_d<h-d
\]
and the left-hand side is an integer, the last coordinate of $g(z)$ is a positive integer. Therefore
\[
g(z)\in hB^+.
\]

Now compute
\[
f(v_j+z)
=
(v_{j,1}+z_1,\dots,v_{j,d}+z_d,\,
rh-\sum_{m=1}^d v_{j,m}-\sum_{m=1}^d z_m),
\]
while
\[
\bigl(f(v_j)-(1,\dots,1,h-d)\bigr)+g(z)
=
(v_{j,1}+z_1,\dots,v_{j,d}+z_d,\,
rh-\sum_{m=1}^d v_{j,m}-(h-d)+h-d-\sum_{m=1}^d z_m).
\]
Hence
\[
f(v_j+z)=\bigl(f(v_j)-(1,\dots,1,h-d)\bigr)+g(z).
\]
Therefore
\[
rhB\subseteq Y_h+hB^+,
\]
where
\[
Y_h:=\{\,f(v_j)-(1,\dots,1,h-d):1\le j\le N\,\}.
\]
Thus
\[
|Y_h|\le N\le \binom{(r+1)(k-1)}{k-1}.
\]

Finally, let
\[
\pi:\Z^k\to G
\]
be the homomorphism defined by
\[
\pi(e_i)=f_i
\qquad (1\le i\le k).
\]
Then
\[
\pi(rhB)=\Sigma_{rh}(f_1,\dots,f_k)
\]
and
\[
\pi(hB^+)=\Sigma_h^+(f_1,\dots,f_k).
\]
Applying $\pi$ to the inclusion
\[
rhB\subseteq Y_h+hB^+,
\]
we obtain
\[
\Sigma_{rh}(f_1,\dots,f_k)
\subseteq
X_h+\Sigma_h^+(f_1,\dots,f_k),
\]
where
\[
X_h:=\pi(Y_h).
\]
Since $|X_h|\le |Y_h|$, the required bound follows.
\end{proof}

\begin{theorem}[One-color theorem for unions of unbounded linear sets]\label{thm:onecolorlinear}
Let $A\subseteq G$ be a union of $k$ unbounded linear sets in an abelian group $G$. Then for every $r\in \N$, the set $A$ is an asymptotic
\[
\left(r,\binom{(r+1)(k-1)}{k-1}\right)\text{-approximate group}.
\]
\end{theorem}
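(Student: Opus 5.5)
Write $A=\bigcup_{j=1}^k P_j$ with $P_j=P(f_j;b_{j,1},\dots,b_{j,d_j})$, and let $S_j:=\{n_1b_{j,1}+\cdots+n_{d_j}b_{j,d_j}:n_m\in\Nzero\}$ be the submonoid generated by the periods of $P_j$. Then $P_j=f_j+S_j$. The plan is to decompose $hA$ into pieces indexed by compositions and then invoke Proposition~\ref{prop:positiveshelltuple}.

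\textbf{Decomposition of $hA$.} Because each $S_j$ is a submonoid, $n_jP_j=n_jf_j+S_j$ whenever $n_j\ge 1$. Hence
\[
hA=\bigcup_{\substack{n\in\Nzero^k\\ n_1+\cdots+n_k=h}}\Bigl(\sum_{j=1}^k n_jf_j+\sum_{j:\,n_j\ge1}S_j\Bigr).
\]
Set $S:=S_1+\cdots+S_k$, which is again a submonoid. Restricting to compositions with every $n_j\ge1$ gives
\[
\Sigma_h^+(f_1,\dots,f_k)+S\subseteq hA \qquad (h\ge k).
\]
In the other direction, every piece satisfies $\sum_{j:\,n_j\ge1}S_j\subseteq S$, because $0\in S_j$. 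Therefore $r(hA)=rhA\subseteq \Sigma_{rh}(f_1,\dots,f_k)+S$.

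\textbf{Covering.} For $k\ge2$ and $h\ge h_0:=r(k-1)^2+k$, Proposition~\ref{prop:positiveshelltuple} gives a set $X_h$ with $|X_h|\le\binom{(r+1)(k-1)}{k-1}$ and $\Sigma_{rh}\subseteq X_h+\Sigma_h^+$. Combining this with the two inclusions above,
\[
r(hA)\subseteq X_h+\Sigma_h^+ +S\subseteq X_h+hA,
\]
which is exactly the asymptotic $\bigl(r,\binom{(r+1)(k-1)}{k-1}\bigr)$ condition with threshold $h_0$. For $k=1$, the set $A=f_1+S_1$ is a translate of a submonoid, and the identity $r(hA)=(r-1)hf_1+hA$ holds for all $h\ge1$; this is Theorem~\ref{thm:submonoid} with $q=1$. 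That case gives the bound $1=\binom00$.

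\textbf{Main obstacle.} The delicate point is the lower inclusion $\Sigma_h^++S\subseteq hA$. It requires every color index $j$ to be used at least once, so that all of the period monoids $S_j$ are absorbed into $hA$. This is precisely why the positive shell $\Sigma_h^+$, rather than $\Sigma_h$, is needed in Proposition~\ref{prop:positiveshelltuple}, and why the argument only works for $h\ge h_0$. If the $k$ linear sets are not distinct, I would first discard duplicates, since this only lowers $k$ and the binomial bound is monotone in $k$.
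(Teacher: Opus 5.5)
Your proposal is correct and follows essentially the same route as the paper: you sandwich $r(hA)\subseteq \Sigma_{rh}(f_1,\dots,f_k)+S$ and $\Sigma_h^+(f_1,\dots,f_k)+S\subseteq hA$ around Proposition~\ref{prop:positiveshelltuple}, with the same threshold $r(k-1)^2+k$. The differences are cosmetic. You handle $k=1$ via Theorem~\ref{thm:submonoid}, whereas the paper uses the $k=1$ identity built into the proposition. Discarding duplicate linear sets is unnecessary, since the argument never uses distinctness.
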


\begin{proof}
Write
\[
A=\bigcup_{i=1}^k (a_i+M_i),
\]
where each $a_i\in G$ and each $M_i\subseteq G$ is a submonoid.
Set
\[
M:=M_1+\cdots+M_k.
\]
Fix $h\ge r(k-1)^2+k$.
By Proposition~\ref{prop:positiveshelltuple}, there exists a set $X_h\subseteq G$ such that
\[
|X_h|\le \binom{(r+1)(k-1)}{k-1}
\]
and
\[
\Sigma_{rh}(a_1,\dots,a_k)
\subseteq
X_h+\Sigma_h^+(a_1,\dots,a_k).
\]

We claim that
\[
rhA\subseteq \Sigma_{rh}(a_1,\dots,a_k)+M
\]
and
\[
\Sigma_h^+(a_1,\dots,a_k)+M\subseteq hA.
\]

For the first claim, let $x\in rhA$. Then
\[
x\in n_1(a_1+M_1)+\cdots+n_k(a_k+M_k)
\]
for some $n_1,\dots,n_k\in \Nzero$ with
\[
n_1+\cdots+n_k=rh.
\]
Hence
\[
x\in
(n_1a_1+\cdots+n_ka_k)
+
(n_1M_1+\cdots+n_kM_k).
\]
If $n_i=0$, then $n_iM_i=\{0\}\subseteq M_i$.
If $n_i\ge 1$, then $n_iM_i=M_i$ because $M_i$ is a submonoid containing $0$.
Therefore
\[
n_1M_1+\cdots+n_kM_k\subseteq M,
\]
and so
\[
x\in \Sigma_{rh}(a_1,\dots,a_k)+M.
\]
This proves the first claim.

For the second claim, let
\[
u=n_1a_1+\cdots+n_ka_k\in \Sigma_h^+(a_1,\dots,a_k),
\]
so that
\[
n_1,\dots,n_k\in \N
\qquad\text{and}\qquad
n_1+\cdots+n_k=h.
\]
Then
\[
u+M
=
(n_1a_1+\cdots+n_ka_k)+(M_1+\cdots+M_k)
=
\sum_{i=1}^k (n_ia_i+M_i).
\]
Since $n_i\ge 1$, we have
\[
n_i(a_i+M_i)=n_ia_i+n_iM_i=n_ia_i+M_i.
\]
Therefore
\[
u+M
=
\sum_{i=1}^k n_i(a_i+M_i)
\subseteq hA.
\]
This proves the second claim.

Combining the two claims with the inclusion from Proposition~\ref{prop:positiveshelltuple}, we get
\[
rhA
\subseteq
\Sigma_{rh}(a_1,\dots,a_k)+M
\subseteq
X_h+\Sigma_h^+(a_1,\dots,a_k)+M
\subseteq
X_h+hA.
\]
Thus
\[
|X_h|\le \binom{(r+1)(k-1)}{k-1}
\]
for all
\[
h\ge r(k-1)^2+k,
\]
which proves the theorem.
\end{proof}

\begin{proof}[Proof of Theorem~\ref{thm:unboundedlinear}]
For each color $i$, Theorem~\ref{thm:onecolorlinear} implies that $A_i$ is an asymptotic
\[
\left(r,\binom{(r+1)(k_i-1)}{k_i-1}\right)\text{-approximate group}.
\]
Applying the colorwise lifting theorem (Theorem~\ref{thm:colorwise}) with
\[
\ell_i=\binom{(r+1)(k_i-1)}{k_i-1}
\]
for each $i$, we conclude that $\cA$ is a chromatic asymptotic $(r,\ell)$-approximate group with
\[
\ell
=
\prod_{i=1}^q \ell_i
=
\prod_{i=1}^q \binom{(r+1)(k_i-1)}{k_i-1}.
\]
\end{proof}

\begin{corollary}\label{cor:alllinear}
Let $\cA=(A_1,\dots,A_q)$ be a tuple of subsets of an abelian group. Assume that each $A_i$ is a finite union of bounded or unbounded generalized arithmetic progressions. Then for every $r\in \N$, $\cA$ is a chromatic asymptotic $(r,\ell)$-approximate group for some finite $\ell$.

More precisely, after refining each bounded linear set into finitely many singleton unbounded linear sets, if $s_i$ denotes the total number of resulting unbounded linear pieces in $A_i$, then one may take
\[
\ell=
\prod_{i=1}^q \binom{(r+1)(s_i-1)}{s_i-1}.
\]
\end{corollary}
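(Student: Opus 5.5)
The corollary should follow from Theorem~\ref{thm:unboundedlinear} once each color class is written as a finite union of unbounded linear sets. We also need to count the pieces carefully so that the constant comes out as $\prod_i\lambda_r(s_i)$.

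First, fix a color $i$ and write
\[
A_i=\bigcup_{j}P^{(j)}\;\cup\;\bigcup_{j'}Q^{(j')},
\]
where the $P^{(j)}$ are unbounded linear sets and the $Q^{(j')}=P_{m_1,\dots,m_d}(a;b_1,\dots,b_d)$ are bounded ones. Each bounded $Q^{(j')}$ is a finite set. Indeed, it is the image of the box $\prod_{l}[0,m_l]\cap\Nzero^d$, which has at most $\prod_l(m_l+1)$ points. Hence it equals $\bigcup_{x\in Q^{(j')}}P(x;0)$, since $P(x;0)=\{x\}$ is an unbounded linear set with generator $0$, as already noted in the introduction.

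Replacing each bounded piece by these singletons expresses $A_i$ as a union of finitely many unbounded linear sets. Let $s_i$ be the number of resulting pieces. Repetitions do no harm: listing a piece twice still gives a valid representation as a union of $s_i$ unbounded linear sets. If one prefers, duplicates can be discarded, and this only lowers the count. Since $\lambda_r$ is nondecreasing in $k$, the bound survives either way.

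Then apply Theorem~\ref{thm:unboundedlinear} with $k_i=s_i$. This yields that $\cA$ is a chromatic asymptotic $(r,\ell)$-approximate group with $\ell=\prod_i\binom{(r+1)(s_i-1)}{s_i-1}$, which is finite. That gives both assertions of the corollary.

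The one point needing care is that Theorem~\ref{thm:onecolorlinear} accepts the degenerate monoids $M=\{0\}$ arising from $P(x;0)$. Its proof only uses that each $M_i$ is a submonoid with $n_iM_i=M_i$ for $n_i\ge1$, and $\{0\}$ satisfies this. It also does not require the $a_i$ to be distinct, because Proposition~\ref{prop:positiveshelltuple} is stated for arbitrary $f_1,\dots,f_k$. So no genuine obstacle is expected, and the argument is a bookkeeping reduction.
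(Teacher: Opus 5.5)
Your proposal is correct and is essentially the paper's own argument. Like the paper, you refine each bounded progression into finitely many singletons $P(x;0)$, let $s_i$ count the resulting unbounded linear pieces, and apply Theorem~\ref{thm:unboundedlinear} with $k_i=s_i$; your extra checks (the degenerate monoid $\{0\}$ and repeated base points are harmless in Theorem~\ref{thm:onecolorlinear} and Proposition~\ref{prop:positiveshelltuple}) are valid and just make explicit what the paper leaves implicit.
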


\begin{proof}
Each bounded linear set is a finite union of singleton sets, hence a finite union of unbounded linear sets of the form $P(x;0)$. Therefore each $A_i$ is a finite union of unbounded linear sets. Let $s_i$ denote the number of pieces after this refinement. Then Theorem~\ref{thm:unboundedlinear} applies and yields the stated conclusion.
\end{proof}

\begin{proof}[Proof of Theorem~\ref{thm:inhomogeneous}]
Since $B$ is finite with $|B|=m$, Proposition~\ref{prop:finiteabelian} applied to the set $B$ with $h=1$ yields, for every fixed $r$, a set $Z\subseteq G$ with
\[
|Z|\le \binom{(r+1)(m-1)}{m-1}
\qquad\text{and}\qquad
rB\subseteq Z+B.
\]
Now let $\bh\succeq \bh_0$, where $\bh_0$ is a threshold for the CAAG property of $\cA$. Then there exists $X_{\bh}$ with
\[
|X_{\bh}|\le \ell
\qquad\text{and}\qquad
(r\bh)\cdot \cA\subseteq X_{\bh}+\bh\cdot \cA.
\]
Using abelianness,
\[
r(\bh\cdot \cA+B)=r(\bh\cdot \cA)+rB.
\]
Hence
\[
r(\bh\cdot \cA+B)
\subseteq
X_{\bh}+\bh\cdot \cA+Z+B
=
(X_{\bh}+Z)+(\bh\cdot \cA+B).
\]
Define
\[
Y_{\bh}:=X_{\bh}+Z.
\]
Then
\[
|Y_{\bh}|
\le
|X_{\bh}|\,|Z|
\le
\ell \binom{(r+1)(m-1)}{m-1}.
\]
Thus $S_{\bh}$ is an asymptotic
\[
\left(r,\ell\binom{(r+1)(m-1)}{m-1}\right)\text{-approximate family}.
\]
\end{proof}

\subsection{Representation layers over \texorpdfstring{$\Z$}{Z}}\label{sec:representation}

We now specialize to tuples of finite sets of integers and study the threshold chromatic layers
\[
(\bh\cdot \cA)^{(t)}.
\]
We show that these layers are themselves asymptotically approximate.

We use the following theorem from chromatic sumset theory.

\begin{theorem}[Nathanson \cite{NathansonChromaticSumsets}]\label{thm:nathanson}
Let $\cA=(A_1,\dots,A_q)$ be a normalized tuple of finite sets of integers, and let
\[
a_i^*:=\max(A_i)\ge 1.
\]
Fix $t\in \N$. Then there exist nonnegative integers $c_t,d_t$, finite sets $C_t,D_t\subseteq \Nzero$, and a threshold vector $\bh_t\in \Nzero^q$ such that for every $\bh\succeq \bh_t$,
\[
(\bh\cdot \cA)^{(t)}
=
C_t\cup [c_t,\bh\cdot \ba^*-d_t]\cup (\bh\cdot \ba^*-D_t),
\]
where
\[
\ba^*:=(a_1^*,\dots,a_q^*).
\]
\end{theorem}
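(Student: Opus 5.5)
The approach is to prove the three pieces of the interval-plus-edges description separately: the lower edge, the upper edge (by a reflection reducing it to the lower edge), and the long middle interval, which is the substantive part.

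\emph{Step 1: stabilization at the bottom.} Fix $n\ge 0$. Since $0\in A_i$ for every $i$, a weakly increasing $h_i$-tuple from $A_i$ is the same as a multiset of nonzero elements of $A_i$ with at most $h_i$ parts, padded by zeros. Once $h_i\ge n$ for every $i$, the cap on the number of parts is irrelevant. Hence $r_{\cA,\bh}(n)=p(n)$, where $p(n)$ counts $q$-tuples of partitions whose $i$-th entry uses parts from $A_i\setminus\{0\}$ and whose total is $n$. Moreover, $r_{\cA,\bh}(n)$ is nondecreasing in $\bh$, because padding with zeros injects representations.

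Since $\gcd(\bigcup_i A_i)=1$, every sufficiently large $n$ has at least one representation, so $p(n)\ge 1$ eventually. More generally, $p$ is (colored-)partition counting with generating function $\prod_i\prod_{a\in A_i\setminus\{0\}}(1-x^a)^{-1}$. So $p(n)$ is either eventually bounded by a constant or tends to infinity. In either case $\{n:p(n)\ge t\}$ is eventually periodic, and in fact eventually all of $\Nzero$ or finite.

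I define $c_t$ as the least integer with $p(n)\ge t$ for all $n\ge c_t$, and $C_t:=\{n<c_t:p(n)\ge t\}$. If no such $c_t$ exists, the layer is finite and the statement holds with an empty interval. The exact degenerate conventions should be matched to Nathanson's.

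\emph{Step 2: the top edge by reflection.} Replace each $A_i$ by $A_i^\vee:=a_i^*-A_i$. This set is again normalized: it has minimum $0$ and maximum $a_i^*$, and the gcd condition is preserved up to the usual check on differences. A tuple of multisets summing to $n$ corresponds bijectively to one summing to $\bh\cdot\ba^*-n$. Thus $r_{\cA,\bh}(\bh\cdot\ba^*-n)=r_{\cA^\vee,\bh}(n)$, and Step 1 applied to $\cA^\vee$ yields $d_t$ and $D_t$.

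\emph{Step 3: the middle interval, which is the main obstacle.} I must show that for $\bh$ large, every $n$ with $c_t\le n\le \bh\cdot\ba^*-d_t$ satisfies $r_{\cA,\bh}(n)\ge t$. The threshold must be uniform, even though the interval grows with $\bh$. The plan is to fix a window length $L$, a multiple of $\operatorname{lcm}(a_1^*,\dots,a_q^*)$ larger than $c_t+d_t$, and to write $n=m+\sum_i k_ia_i^*$.

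In this decomposition, $m$ lies in a bounded range in which $p(m)\ge t$ is guaranteed by Step 1 (namely $c_t\le m<c_t+L$). The integers $k_i\ge 0$ are chosen greedily, subject to $k_i\le h_i-H$ for a fixed buffer $H$ with $H\ge c_t+L$. A representation of $m$ using at most $H$ nonzero parts per color, padded with $k_i$ copies of $a_i^*$ and zeros, gives a representation of $n$ of size $\bh$. Distinct representations of $m$ give distinct representations of $n$, because the appended copies of $a_i^*$ can be stripped off canonically from the top of each increasing sequence.

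This greedy choice handles all $n$ up to roughly $\bh\cdot\ba^*-\sum_i Ha_i^*$. The remaining band near the top has bounded width and is covered symmetrically by running the same argument for $\cA^\vee$. Enlarging $\bh_t$ guarantees that the two bands overlap.

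The delicate points are the canonical stripping, which requires care when $m$'s representation itself uses $a_i^*$, and making the overlap of the two regimes uniform. I would handle the stripping by counting only representations of $m$ with parts strictly less than $a_i^*$, or, if that loses too much, by taking $L$ large enough to absorb it.
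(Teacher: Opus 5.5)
The paper gives no proof of this statement. It is quoted from Nathanson and used only as input to Theorem~\ref{thm:trich}, so your proposal has to stand on its own. It essentially does, except at the one point where the printed statement is itself false.

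\textbf{The dichotomy for $p$ and the exceptional case.} You assert the dichotomy for $p$ without proof. It has a short proof, and that proof also locates the bad case. Call a pair $(i,a)$ with $a\in A_i\setminus\{0\}$ a part type.
\begin{itemize}
\item Suppose there are two distinct part types $(i,a)\neq(i',a')$, and $p(n)\ge 1$ for all $n\ge n_0$. For $n\ge n_0+Maa'$, fix one representation of $n-Maa'$. For each $0\le j\le M$, adjoin $a$ with multiplicity $ja'$ in color $i$ and $a'$ with multiplicity $(M-j)a$ in color $i'$. These $M+1$ representations of $n$ have different multiplicities of $(i,a)$, so $p(n)\to\infty$.
\item Suppose there is only one part type. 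Since every $|A_i|\ge 2$ and the gcd is $1$, necessarily $q=1$, $A_1=\{0,1\}$, and $p\equiv 1$.
\end{itemize}
In the second case $r_{\cA,h}(n)=1$ for $0\le n\le h$. So for $t\ge 2$ the threshold-$t$ layer is empty for every $h$. But $[c_t,h-d_t]$, with $c_t,d_t\in\Nzero$ fixed, is nonempty once $h\ge c_t+d_t$.

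Hence your sentence ``the statement holds with an empty interval'' is wrong: no choice of data works. The theorem as printed needs an extra hypothesis such as $\sum_i(|A_i|-1)\ge 2$, or $t=1$; this is exactly the condition under which your $c_t$ exists. This is a defect of the statement rather than of your method. It does not damage Theorem~\ref{thm:trich}, whose conclusion is trivial when the layers are empty.

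\textbf{Step 3 is easier than you fear.} Under that hypothesis your three steps give a complete proof.
\begin{itemize}
\item Injectivity needs no stripping argument. Once $n$ is fixed, the decomposition $n=m+\sum_i k_i a_i^*$ is fixed. Adjoining the fixed multiset of $k_i$ copies of $a_i^*$ to color $i$ is injective, because multiset union is cancellative. This holds whether or not the representation of $m$ already uses $a_i^*$. So $r_{\cA,\bh}(n)\ge p(m)\ge t$ directly.
\item Drop your fallback of counting only parts $<a_i^*$. It can destroy the bound $p(m)\ge t$: for $A_1=A_2=\{0,1\}$ it leaves no parts at all.
\item $L$ need not be a multiple of the lcm; $L\ge\max_i a_i^*$ suffices. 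Raising $k_1$ to its cap, then $k_2$, and so on, produces a chain of sums whose gaps are at most $\max_i a_i^*$.
\item The overlap is a single inequality. The lower regime covers $[c_t,\bh\cdot\ba^*-B]$ and the reflected regime covers $[B^\vee,\bh\cdot\ba^*-d_t]$, for constants $B,B^\vee$ independent of $\bh$. So overlap only needs $\bh\cdot\ba^*\ge B+B^\vee$.
\end{itemize}
The remaining requirements on $\bh_t$ are routine. Each $h_i$ must exceed both of your buffers; these are larger than $c_t$ and $d_t$, so the edge descriptions from Steps~1 and~2 are already in force. Your reflection step is also fine: $\cA^\vee$ is again normalized because $A_i^\vee-A_i^\vee=A_i-A_i$ and $0\in A_i^\vee$.
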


\subsubsection{A covering lemma for interval-plus-edge sets}

\begin{lemma}\label{lem:intervalcover}
Let
\[
S_H:=C\cup [c,H-d]\cup (H-D)
\]
for integers $H$ and fixed data $c,d\in \Nzero$, finite sets $C,D\subseteq \Nzero$. Put
\[
L:=H-c-d+1.
\]
Assume $L\ge 1$. Then for every $r\in \N$,
\[
[c,rH-d]\subseteq \bigcup_{j=0}^r (jL+[c,H-d])
\]
provided
\[
H\ge r(c+d)-r.
\]
\end{lemma}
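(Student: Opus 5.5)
The plan is to observe that the translates $jL+[c,H-d]$, for $j=0,1,\dots,r$, tile a single contiguous integer interval with no gaps. After that, the hypothesis $H\ge r(c+d)-r$ is exactly what is needed for this interval to reach the right endpoint $rH-d$. No earlier result of the paper is needed; the argument is elementary bookkeeping with interval endpoints.

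\emph{Step 1 (adjacency of consecutive translates).} For each $j$,
\[
jL+[c,H-d]=[c+jL,\;H-d+jL].
\]
Since $L=H-c-d+1$, the left endpoint of the $(j+1)$-st translate is
\[
c+(j+1)L=(H-d+jL)+1,
\]
which is one more than the right endpoint of the $j$-th translate. Because $L\ge 1$, each translate is a nonempty interval, since it has exactly $L$ elements. Hence by induction on $r$,
\[
\bigcup_{j=0}^r \bigl(jL+[c,H-d]\bigr)=[c,\;H-d+rL].
\]

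\emph{Step 2 (endpoint comparison).} The left endpoints agree, both being $c$, so it remains to check that $rH-d\le H-d+rL$. Substituting $L$, this reads
\[
rH\le H+r(H-c-d+1)=(r+1)H-r(c+d)+r,
\]
which is equivalent to $H\ge r(c+d)-r$. This is precisely the hypothesis, so $[c,rH-d]\subseteq[c,H-d+rL]$ and the lemma follows.

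If $rH-d<c$, the left side is empty and the statement is trivial, so this case needs no separate treatment. There is no real obstacle here. The only point requiring care is verifying that consecutive translates are exactly adjacent rather than overlapping or leaving a gap, and that is where the specific choice $L=H-c-d+1$ enters.
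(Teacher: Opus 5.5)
Your proof is correct and follows the paper's argument: the translates $jL+[c,H-d]$ are exactly adjacent, so their union is the interval $[c,\,H-d+rL]=[c,\,c+(r+1)L-1]$. The endpoint comparison $rH-d\le H-d+rL$ then reduces to the hypothesis $H\ge r(c+d)-r$.
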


\begin{proof}
Each translate
\[
jL+[c,H-d]
\]
is the interval
\[
[jL+c,\ jL+H-d].
\]
The length of $[c,H-d]$ is
\[
(H-d)-c+1=H-c-d+1=L.
\]
Hence the successive intervals
\[
0L+[c,H-d],\ 1L+[c,H-d],\ \dots,\ rL+[c,H-d]
\]
are consecutive and overlapping or adjacent: the start of the next interval is exactly one more than the end of the previous interval. Therefore their union is the entire interval
\[
[c,\ c+(r+1)L-1].
\]
Thus it suffices to show that
\[
rH-d\le c+(r+1)L-1.
\]
Substituting $L=H-c-d+1$, the right-hand side becomes
\[
c+(r+1)(H-c-d+1)-1.
\]
Subtracting $rH-d$, we get
\[
H-r(c+d)+r.
\]
By hypothesis this is nonnegative. Hence
\[
[c,rH-d]\subseteq [c,c+(r+1)L-1]
=\bigcup_{j=0}^r (jL+[c,H-d]).
\]
This proves the lemma.
\end{proof}

\begin{proof}[Proof of Theorem~\ref{thm:trich}]
Let
\[
S_{\bh}:=(\bh\cdot \cA)^{(t)}.
\]
By Theorem~\ref{thm:nathanson}, there exist fixed data $c_t,d_t,C_t,D_t$ and a threshold vector $\bh_t$ such that for all $\bh\succeq \bh_t$,
\[
S_{\bh}=C_t\cup [c_t,H_{\bh}-d_t]\cup (H_{\bh}-D_t),
\]
where
\[
H_{\bh}:=\bh\cdot \ba^*.
\]
Since every $a_i^*\ge 1$, we have
\[
H_{\bh}=\bh\cdot \ba^*=\sum_{i=1}^q h_i a_i^*\ge \sum_{i=1}^q h_i.
\]
Therefore $H_{\bh}\to \infty$ as $\bh\to \infty$ coordinatewise.

Set
\[
C:=c_t+d_t.
\]
Choose a vector $\bh_{r,t}\succeq \bh_t$ large enough that
\[
H_{\bh}\ge \max\{rC-r,\,C\}
\qquad\text{for all }\bh\succeq \bh_{r,t}.
\]
This is possible because $H_{\bh}\to\infty$ coordinatewise.

Fix $\bh\succeq \bh_{r,t}$.
Since $\bh_{r,t}\succeq \bh_t$ and $r\bh\succeq \bh\succeq \bh_t$, Theorem~\ref{thm:nathanson}
applies both to $S_{\bh}$ and to $S_{r\bh}$.
Write
\[
H:=H_{\bh},
\qquad
I_H:=[c_t,H-d_t],
\qquad
R_H:=H-D_t.
\]
Then
\[
S_{\bh}=C_t\cup I_H\cup R_H.
\]
Define
\[
L:=H-c_t-d_t+1.
\]
Because $H\ge C=c_t+d_t$, we have $L\ge 1$.

Now define
\[
X_{\bh}:=\{0,L,2L,\dots,rL,(r-1)H\}.
\]
Hence
\[
|X_{\bh}|\le r+2.
\]
We claim that
\[
S_{r\bh}\subseteq X_{\bh}+S_{\bh}.
\]
By Theorem~\ref{thm:nathanson} again,
\[
S_{r\bh}=C_t\cup [c_t,rH-d_t]\cup (rH-D_t).
\]
We treat the three pieces separately.

\medskip
\noindent
\textbf{Step 1: the initial finite part.}
Since $C_t\subseteq S_{\bh}$, we have
\[
C_t\subseteq 0+S_{\bh}\subseteq X_{\bh}+S_{\bh}.
\]

\medskip
\noindent
\textbf{Step 2: the central interval.}
By Lemma~\ref{lem:intervalcover} with $c=c_t$, $d=d_t$, and the present value of $H$, we have
\[
[c_t,rH-d_t]\subseteq \bigcup_{j=0}^r (jL+I_H).
\]
But $jL\in X_{\bh}$ for $0\le j\le r$, and $I_H\subseteq S_{\bh}$. Therefore
\[
[c_t,rH-d_t]\subseteq X_{\bh}+S_{\bh}.
\]

\medskip
\noindent
\textbf{Step 3: the terminal finite part.}
Since
\[
R_H=H-D_t\subseteq S_{\bh},
\]
we have
\[
(r-1)H+R_H=(r-1)H+(H-D_t)=rH-D_t\subseteq X_{\bh}+S_{\bh}.
\]
That is,
\[
rH-D_t\subseteq X_{\bh}+S_{\bh}.
\]
This is precisely the terminal finite part of $S_{r\bh}$.

Combining the three steps, we conclude that
\[
S_{r\bh}
=C_t\cup [c_t,rH-d_t]\cup (rH-D_t)
\subseteq X_{\bh}+S_{\bh}.
\]
Thus
\[
((r\bh)\cdot \cA)^{(t)}
\subseteq
X_{\bh}+(\bh\cdot \cA)^{(t)}
\]
with $|X_{\bh}|\le r+2$. This proves the theorem.
\end{proof}

\begin{remark}\label{rem:notdirectfromfinite}
Theorem~\ref{thm:trich} is not a formal consequence of the finite-set covering theorem applied to the sets
\[
S_{\bh}:=(\bh\cdot \cA)^{(t)}.
\]
Indeed, Proposition~\ref{prop:finiteabelian} controls
\[
rS_{\bh}
\]
by translates of $S_{\bh}$ for each fixed finite set $S_{\bh}$, whereas Theorem~\ref{thm:trich} compares the different layers
\[
S_{r\bh}
\qquad\text{and}\qquad
S_{\bh}.
\]
In general one does not have
\[
S_{r\bh}=rS_{\bh}.
\]
Moreover, the finite-set theorem would yield a bound depending on the cardinality of $S_{\bh}$, and hence typically depending on $\bh$, while Theorem~\ref{thm:trich} gives the uniform bound
\[
|X_{\bh}|\le r+2
\]
for all sufficiently large $\bh$.
\end{remark}

\begin{corollary}\label{cor:arbitraryfinite}
Let $\cA'=(A_1',\dots,A_q')$ be any tuple of nonempty finite sets of integers. Fix $t,r\in \N$. Then there exists a threshold vector $\bh_{r,t}$ such that for every $\bh\succeq \bh_{r,t}$, there exists a set $X_{\bh}\subseteq \Z$ with
\[
|X_{\bh}|\le r+2
\]
and
\[
((r\bh)\cdot \cA')^{(t)}
\subseteq
X_{\bh}+(\bh\cdot \cA')^{(t)}.
\]
\end{corollary}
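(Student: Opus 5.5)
The plan is to reduce Corollary~\ref{cor:arbitraryfinite} to Theorem~\ref{thm:trich} by an affine normalization of each color class, and to track how the threshold layers transform.

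\textbf{Normalizing.} For each $i$ put $m_i:=\min A_i'$ and $A_i'':=A_i'-m_i$, so $\min A_i''=0$. Let $g:=\gcd\bigl(\bigcup_i A_i''\bigr)$. If $g=0$, every $A_i''=\{0\}$, and this degenerate case is handled separately below. Otherwise set $A_i:=A_i''/g$, which gives a normalized tuple $\cA=(A_1,\dots,A_q)$ with $A_i'=gA_i+m_i$ (here $gA_i$ means the dilate).

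\textbf{Transport of representations.} The map $a\mapsto ga+m_i$ is strictly increasing on each color. It therefore gives a bijection between the weakly increasing representation tuples of $n$ for $(\cA,\bh)$ and those of $gn+\bh\cdot\mathbf m$ for $(\cA',\bh)$, where $\mathbf m=(m_1,\dots,m_q)$. Hence
\[
(\bh\cdot\cA')^{(t)}=g\,(\bh\cdot\cA)^{(t)}+\bh\cdot\mathbf m,
\]
and the same formula holds with $r\bh$ in place of $\bh$. So if $S_{r\bh}\subseteq X_{\bh}+S_{\bh}$ for the layers of $\cA$, then
\[
((r\bh)\cdot\cA')^{(t)}\subseteq \bigl(gX_{\bh}+(r-1)\bh\cdot\mathbf m\bigr)+(\bh\cdot\cA')^{(t)},
\]
which mirrors the computation in Proposition~\ref{prop:affine}. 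The new translate set has cardinality at most $r+2$.

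\textbf{Colors with $a_i^*=0$.} Theorem~\ref{thm:trich} also requires $a_i^*=\max A_i\ge 1$ for every $i$, and this fails exactly for those colors with $|A_i'|=1$. I expect this to be the main obstacle.

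Such a color contributes a single summand $h_i\cdot 0$, and it has exactly one weakly increasing tuple. So $r_{\cA,\bh}$ equals the representation function of the subtuple $\widetilde\cA$ of colors with $|A_i|\ge2$, evaluated at $\widetilde\bh$, the restriction of $\bh$ to those colors. Thus one simply drops these colors: their translates are already absorbed into $\bh\cdot\mathbf m$.

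Two points need checking for the subtuple:
\begin{enumerate}[label=\textup{(\arabic*)}]
\item It is still normalized, since the removed colors contribute only $0$ and so do not affect the gcd.
\item Theorem~\ref{thm:trich} applies to $\widetilde\cA$ (with threshold on the remaining coordinates). Its threshold vector is extended by $0$ on the dropped coordinates.
\end{enumerate}

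If every color is a singleton, then each layer is either a singleton or empty. In this case take $X_{\bh}=\{(r-1)\bh\cdot\mathbf m\}$ when $t=1$. When $t\ge2$ both layers are empty, so any $X_{\bh}$ works and we may take $X_{\bh}=\{0\}$.
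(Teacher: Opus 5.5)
Your proposal is correct and takes essentially the same route as the paper: the same affine normalization with the transported identity $(\bh\cdot\cA')^{(t)}=g\,(\bh\cdot\cA)^{(t)}+\bh\cdot\mathbf m$, removal of singleton colors with the threshold extended by $0$, an application of Theorem~\ref{thm:trich}, and the same separate treatment of the all-singleton case. The only difference is the order of steps: you normalize every color first and then discard those that became $\{0\}$, whereas the paper first splits off the singleton colors as the translate $\tau_{\bh}=\sum_{i\in J}h_ia_{i,0}'$ and then normalizes only the remaining colors.
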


\begin{proof}
Let
\[
a_{i,0}':=\min(A_i')
\qquad (1\le i\le q),
\]
and let
\[
I:=\{i\in [1,q]: |A_i'|\ge 2\},
\qquad
J:=[1,q]\setminus I.
\]

If $I=\varnothing$, then every $A_i'$ is a singleton, say
\[
A_i'=\{a_{i,0}'\}.
\]
In that case every chromatic representation is unique. Hence
\[
(\bh\cdot \cA')^{(t)}=
\begin{cases}
\left\{\sum_{i=1}^q h_i a_{i,0}'\right\}, & t=1,\\[1ex]
\varnothing, & t\ge 2.
\end{cases}
\]
Therefore the desired inclusion holds immediately for every $\bh$: if $t\ge 2$, we may take
\[
X_{\bh}:=\{0\},
\]
while if $t=1$, we take
\[
X_{\bh}:=\left\{(r-1)\sum_{i=1}^q h_i a_{i,0}'\right\}.
\]

Assume now that $I\neq \varnothing$. Define
\[
\tau_{\bh}:=\sum_{i\in J} h_i a_{i,0}'.
\]
Since every singleton color contributes exactly one summand and does not affect multiplicities, we have
\[
(\bh\cdot \cA')^{(t)}
=
\tau_{\bh}+(\bh_I\cdot \cA_I')^{(t)},
\]
where
\[
\cA_I':=(A_i')_{i\in I},
\qquad
\bh_I:=(h_i)_{i\in I}.
\]

Define
\[
d:=\gcd\Bigl(\bigcup_{i\in I}(A_i'-a_{i,0}')\Bigr),
\]
and normalized sets
\[
A_i:=\left\{\frac{a-a_{i,0}'}{d}: a\in A_i'\right\}
\qquad (i\in I).
\]
Then each $A_i$ is finite, normalized, and satisfies
\[
\max(A_i)\ge 1.
\]
Let
\[
\cA:=(A_i)_{i\in I}.
\]
For every $\bh_I$, Nathanson's normalization identity gives
\[
(\bh_I\cdot \cA_I')^{(t)}
=
d\,(\bh_I\cdot \cA)^{(t)}+\sum_{i\in I} h_i a_{i,0}'.
\]
Indeed, for each color $i\in I$, the map
\[
a\mapsto da+a_{i,0}'
\]
is a bijection from $A_i$ onto $A_i'$ preserving the weakly increasing order within color $i$. Applying this map entrywise gives a bijection between the representations counted by $r_{\cA,\bh_I}(n)$ and those counted by $r_{\cA_I',\bh_I}$ at
\[
dn+\sum_{i\in I} h_i a_{i,0}'
\]
respectively.
Hence multiplicities are preserved under this affine normalization.

Apply Theorem~\ref{thm:trich} to the normalized tuple $\cA$. Thus there exists a threshold vector
\[
\bg_{r,t}\in \Nzero^{|I|}
\]
such that for every $\bh_I\succeq \bg_{r,t}$ there exists a set $Y_{\bh_I}\subseteq \Z$ with
\[
|Y_{\bh_I}|\le r+2
\]
and
\[
((r\bh_I)\cdot \cA)^{(t)}
\subseteq
Y_{\bh_I}+(\bh_I\cdot \cA)^{(t)}.
\]

Extend $\bg_{r,t}$ to a vector
\[
\bh_{r,t}\in \Nzero^q
\]
by setting the coordinates in $J$ equal to $0$. Then for every $\bh\succeq \bh_{r,t}$ we have $\bh_I\succeq \bg_{r,t}$. Multiplying the last inclusion by $d$ and translating by
\[
r\sum_{i\in I} h_i a_{i,0}'
\]
yields
\[
((r\bh_I)\cdot \cA_I')^{(t)}
\subseteq
\left(dY_{\bh_I}+(r-1)\sum_{i\in I} h_i a_{i,0}'\right)
+
(\bh_I\cdot \cA_I')^{(t)}.
\]
Finally, translating by
\[
r\tau_{\bh}
\]
gives
\[
((r\bh)\cdot \cA')^{(t)}
\subseteq
X_{\bh}+(\bh\cdot \cA')^{(t)},
\]
where
\[
X_{\bh}:=
dY_{\bh_I}+(r-1)\sum_{i=1}^q h_i a_{i,0}'.
\]
Since
\[
|X_{\bh}|\le |Y_{\bh_I}|\le r+2,
\]
the proof is complete.
\end{proof}

\section{Concluding remarks and further directions}

We studied a chromatic covering framework for tuples of sets in abelian groups and showed that it is stable under homomorphisms and affine normalization, admits exact and finite covering results for translated submonoids and finite-core-plus-submonoid families, and yields chromatic asymptotic approximate-group theorems for semilinear color classes. In the integer setting, Nathanson's interval-plus-edges description of threshold chromatic layers leads to a uniform asymptotic covering theorem for the threshold layers themselves. Thus both chromatic sumsets and threshold chromatic layers fit naturally into a common asymptotic approximate-covering framework. Several natural extensions suggest themselves.

\subsection*{1. Sharper quantitative bounds}
Can we obtain sharper quantitative bounds on the constants?

\subsection*{2. Mixed structural classes}
A natural extension is to formulate a mixed theorem in which different colors are allowed to belong to different structural classes simultaneously. For example, one might allow some colors to be finite, others to be semilinear, and still others to be of the form $F_i+M_i$. 

\subsection*{3. Semilinear and Presburger-definable families}
Over finitely generated abelian groups, semilinear sets are closely related to Presburger-definable sets. The results suggest that chromatic asymptotic approximate-group phenomena may extend beyond finite unions of linear sets to wider classes of eventually semilinear families. A precise structural theorem in this direction may require a more refined analysis of how covering constants behave under Boolean operations and parameterized unions. Even partial results for Presburger-definable color classes would broaden the scope of the theory.

\subsection*{4. Inverse problems}
The present work is primarily of direct type: given a structured tuple, we prove chromatic asymptotic covering. A complementary inverse theory would ask what structural information is forced by the existence of small chromatic covering numbers. For instance, suppose that for some fixed $r$ and $\ell$ one has
\[
(r\bh)\cdot \cA\subseteq X_{\bh}+\bh\cdot \cA
\qquad\text{with}\qquad |X_{\bh}|\le \ell
\]
for all sufficiently large $\bh$. Must the color classes then exhibit semilinear, submonoid-like, or generalized-progression structure? Even partial inverse results of this kind would connect the chromatic theory developed here with the classical inverse theory of small doubling and approximate groups.

\subsection*{5. Dependence on the threshold parameter \texorpdfstring{$t$}{t}}
In the representation-layer results, the threshold $t$ is fixed and the asymptotic behavior is studied as $\bh\to\infty$. It remains to understand how the geometry of the sets
\[
(\bh\cdot \cA)^{(t)}
\]
changes when $t$ is also allowed to vary. A natural question is whether there are useful uniform covering statements in regimes where $t=t(\bh)$ grows slowly with $\bh$, or whether there are phase transitions in the asymptotic shape of threshold layers as the threshold increases.

\subsection*{6. Higher-rank lattice and geometric refinements}
The proof of the finite-color theorem ultimately rests on lattice coverings of simplices by translates of smaller simplices. It may be useful to revisit this geometry more carefully. Better lattice-covering estimates could lead directly to improved explicit constants in both the one-color and chromatic settings. More conceptually, one may seek to identify a geometric model for chromatic covering in which the different colors correspond to independent directions or fibers, thereby making the product structure of the current bounds more transparent.

\subsection*{7. Beyond the abelian setting}
The present theory is formulated in abelian groups, where the chromatic sumset
\[
h_1A_1+\cdots+h_qA_q
\]
is independent of the order of summation and where repeated sumsets behave functorially under homomorphisms and dilations. In nonabelian groups, one would need to decide whether colors come with a prescribed order, whether one studies product sets
\[
A_1^{h_1}\cdots A_q^{h_q},
\]
and which notion of asymptotic covering is most natural. Even very special cases, such as nilpotent groups or ordered product families, may already exhibit nontrivial phenomena.


\end{document}